\numberwithin{equation}{section}
\newtheorem{theo}{Theorem}
\newtheorem{lem}{Lemma}[section]
\newtheorem{defi}{Definition}[section]
\newtheorem{prop}{Proposition}[section]
\newtheorem{rmk}{Remark}[section]
\newcommand{\R}{\mathbb{R}}
\newcommand{\T}{\mathbb{T}}
\newcommand{\Z}{\mathbb{Z}}
\newcommand{\N}{\mathbb{N}}
\newcommand{\be}{\begin{equation}}
\newcommand{\ee}{\end{equation}}
\newcommand{\baa}{\begin{array}}
\newcommand{\eaa}{\end{array}}
\newcommand{\ba}{\begin{eqnarray}}
\newcommand{\ea}{\end{eqnarray}}
\newcommand{\bqs}{\begin{equation*}}
\newcommand{\eqs}{\end{equation*}}
\newcommand{\bqq}{\begin{equation}}
\newcommand{\eqq}{\end{equation}}
\DeclareMathAlphabet\mathbfcal{OMS}{cmsy}{b}{n}
\date{}
\begin{document}

\title{Continuity of pulsating wave speeds for bistable reaction-diffusion equations in spatially periodic media\thanks{This work has received funding from the National Natural Science Foundation of China (12001206) and the Basic and Applied Basic Research Foundation of Guangdong Province (2019A1515110506).}}

\author{Weiwei Ding\footnote{School of Mathematical Sciences, South China Normal University, Guangzhou 510631, China (dingweiwei@m.scnu.edu.cn)} \qquad Zhanghua Liang\footnote{School of Mathematical Sciences, South China Normal University, Guangzhou 510631, China}  \qquad Wenfeng Liu\footnote{School of Mathematical Sciences, South China Normal University, Guangzhou 510631, China} }
\maketitle

\begin{abstract}
This paper is concerned with pulsating waves for multi-dimensional reaction-diffusion equations in spatially periodic media. First, assuming the existence of pulsating waves connecting two linearly stable steady states, we study the continuity of wave speeds with respect to the direction of propagation. The continuity was proved in \cite{guo} under the extra condition that the speeds are nonzero in all directions. Here, we revisit this continuity result without the extra condition. Secondly, we provide some sufficient conditions ensuring the existence of pulsating waves in rapidly oscillating media, which allow the equations to have multiple stable steady states.
\end{abstract}

\section{Introduction and main results}

In this paper, we consider a spatially periodic reaction-diffusion equation of the form
\begin{equation}\label{eq:main}
\partial_t u  = \nabla \cdot (A(x)\nabla u) + f(x,u), \quad t \in \R \, , \ x \in \R^d.
\end{equation}
Here, the matrix field $A(x)=(A_{i,j}(x))_{1\leq i,j\leq d}$ is symmetric of class $C^{1,\alpha}$ for some $\alpha>0$,  and there exists $\alpha_1>0$ such that
\begin{equation}\label{uni-elliptic}
  \sum_{i,j} A_{i,j}(x)y_i y_j  \geq  \alpha_1 |y|^2  \,\,\hbox{ for all }  (x,y)\in\R^d\times\R^d.
 \end{equation}
The nonlinearity $(x,u)\in \R^d\times\R\mapsto f(x,u)$ is of class $C^1$, and $u\mapsto \partial_u f(x,u)$ is locally Lipschitz-continuous uniformly with respect to $x\in\R^d$.  Let  $L_1, L_2,\cdots,L_d$ be positive numbders and $L=(L_1,\cdots,L_d)$ be a vector in $\R^d$. We assume that $A(x)$ and $f(x,u)$ are $L$-periodic with respect to $x\in\R^d$ in the sense that
\begin{equation}\label{period}
A (x + k L ) \equiv A (x),\quad  f (x + k L ,\cdot) \equiv f (x, \cdot)   \,\,\hbox{ for any }\,\, k=(k_1,k_2,\cdots,k_d)\in \Z^d,
\end{equation}
where $kL:= (k_1 L_1 , \cdots , k_d L_d)$. We also assume that 

\begin{itemize}
\item [{\bf (A1)}] $f(\cdot,0)=f(\cdot,1)=0$ and the constants $0$ and $1$ are two linearly stable steady states of \eqref{eq:main} with respect to spatially $L$-periodic solutions.
\end{itemize}
Here, an $L$-periodic steady state $\bar{u}$ of \eqref{eq:main} is said to be linearly stable ({\it resp.} linearly unstable) if $\lambda(\bar{u})>0$ ({\it resp.} $\lambda(\bar{u})<0$), where $\lambda_1(\bar{u})$ is the principal eigenvalue of 
\begin{equation*}
-\nabla \cdot (A\nabla\psi)-\partial_uf(x,\bar{u})\psi=\lambda\psi\hbox{ in }\R^d,\quad \psi>0\hbox{ in }\R^d,\quad \psi\hbox{ is }L\hbox{-periodic}.
\end{equation*}

This work is concerned with the pulsating waves of \eqref{eq:main} connecting the two linearly stable steady states $0$ and $1$. Let us first recall the definition of pulsating waves.  For any $e\in \mathbb{S}^{d-1}$,  a \textit{pulsating wave} connecting 0 and 1 in the direction $e$ is an entire solution $U(t,x)$ of \eqref{eq:main} of the type
\begin{equation}\label{formula-tw}
U (t,x) = \Phi ( x \cdot e - c^*(e)t ,x),
\end{equation}
where $c^*(e) \in\R$ is the wave speed and the wave profile $\Phi$ is $L$-periodic in its second variable, and satisfies
$$\Phi (- \infty , \cdot) = 1 , \quad  \Phi ( + \infty , \cdot) = 0,$$
where both convergences are uniform with respect to the second variable.
If $c^*(e)\neq 0$, then the change of variable $(t,x)\mapsto (x \cdot e - c^*(e)t ,x)$ is revertible and $\Phi$ is uniquely determined by $U$. Moreover, by the periodicity of $\Phi$, there holds
\begin{equation*}
U\left(t+ \frac{kL\cdot e}{c^*(e)},x+kL\right)=U\left(t,x\right) \,\,\hbox{ for all }\,\, (t,x)\in\R\times\R^d, \,\,k\in \Z^d.
\end{equation*}
In this case, we call $U(t,x)$ is a {\it non-stationary pulsating wave}.  On the other hand, if $c^*(e)= 0$, a pulsating wave simply means a stationary solution $U(x)$ which solves \eqref{eq:main} and satisfies that  $U (x) \to 1$ ({\it resp.}~0) as $x \cdot e \to -\infty$ ({\it resp.} $+\infty$). In this case, $U(x)$ is called a {\it stationary pulsating wave}.

In population dynamics, pulsating wave is an important notion to understand the evolution process of the invasive species in spatially periodic media. It is a natural generation of the classical notion of traveling wave in spatially homogeneous media where the coefficients $A$ and $f$ are independent of $x$. 
In such a situation, a typical example for the existence of traveling wave connecting two stable zeros of $f$ (here, the zeros are $0$ and $1$) is that $f$ is of the {\it bistable type}, that is,  $f(0)=f(\theta)=f(1)=0$ for some $\theta\in(0,1)$, $f<0$ on $(0,\theta)$, $f>0$ on $(\theta,1)$ and $f'(0)<0$, $f'(1)<0$.  
For this homogeneous equation 
\begin{equation*}
u_t  = A\Delta u + f(u), \quad t \in \R \, , \ x \in \R^d,
\end{equation*}
it is well known (see \cite{aw,fm1}) that there exists a unique (up to shifts) traveling wave $u(t,x)=\phi(x\cdot e-ct)$ with $0<\phi<1$ in $\R$, $\phi (-\infty)=1$ and $\phi (+\infty)=0$ for any $e \in \mathbb{S}^{d-1}$. Moreover, the speed and the front are independent of $e$. Let us also mention that there may exist traveling waves connecting $0$ and $1$ when $f$ is of the {\it multistable type}, that is, it has multiple stable zeros between $0$ and $1$ (see \cite{fm1} and Proposition \ref{condi} below).

In the spatially periodic media, the existence of pulsating fronts of \eqref{eq:main} connecting $0$ and $1$ was first studied by Xin \cite{xin,xin2} in the case where $f$ is independent of $x$ and is of the unbalanced (i.e. $\int_0^1f(u)du\neq 0$) bistable type. The author obtained the non-stationary pulsating wave under the condition that $A(x)$ is sufficiently close to its integral average,  and also gave an example for the existence of stationary wave  when this condition is invalid. For general $x$-dependent $f$, pulsating waves are known to exist provided that equation \eqref{eq:main} admits a {\it bistable structure} in the sense that, apart from (A1), any $L$-periodic steady states strictly between $0$ and $1$ are required to be linearly unstable; see \cite{dgm,fz} for the one-dimensional results and \cite{ducrot,gr} for the multi-dimensional results. Moreover, explicit conditions on $A$ and $f$ ensuring the bistable structure can be found in \cite{dhz1,ducrot,fz}.  We also refer to \cite{m,nr,z} and references therein for more existence and non-existence results when $f$ is of the bistable type.  It is worthy noting that, the bistable structure is only a sufficient condition for the existence of pulsating waves, and
the existence may remain valid when \eqref{eq:main} has multiple periodic stable steady states between $0$ and $1$ (see the comments following Proposition \ref{condi}). 

Notice that when $A(x)$ or $f(x,u)$ truly depends on $x$, equation \eqref{eq:main} is not invariant by rotation, and therefore it is a natural to ask how the wave speed $c^*(e)$ depends on the direction $e$. Recently, it is shown in \cite{dg} that $c^*(e)$ can be dramatically different in distinct directions. But, it is still unclear whether $c^*(e)$ varies continuously with respect to $e$. One of our aims in the present  paper is to address this problem under the following assumption:

\begin{itemize}
\item [{\bf (A2)}] For any $e\in \mathbb{S}^{d-1}$, \eqref{eq:main} admits a pulsating wave connecting $0$ and $1$ with speed $c^*(e)\in\R$.
\end{itemize}

The problem on the continuity of wave speeds with respect to directions has been studied by Guo \cite{guo} under the additional condition that $c^*(e)\neq 0$ for all $e\in \mathbb{S}^{d-1}$ (i.e. the pulsating waves are non-stationary in all directions).  By some comparison arguments and compactness arguments, the author proved  the continuity of the wave speeds $e\mapsto c^*(e)$ as well as the continuity of wave profiles $e\mapsto \Phi(\xi,y)$. Here, we will remove the additional condition and revisit the continuity of wave speeds. It should be pointed out that, the case with stationary pulsating waves is degenerate, due to the lack of uniqueness of stationary waves (see \cite[Theorem 1.7]{dhz2}), and therefore without the additional condition, it may be not possible to pursue the continuity of wave profiles. We also mention that, for equation \eqref{eq:main} with monostable or ignition nonlinearities, the (minimal) wave speed depends continuously on the direction \cite{ag}. In those cases, the pulsating waves are non-stationary in all directions, and the (minimal) wave speed is  bounded from below by a positive constant uniformly with respect to $e\in\mathbb{S}^{d-1}$. Hence, the problem in our bistable case (under the conditions (A1) and (A2)) has extra difficulties.

Before stating the main results, let us first collect some qualitative properties of the pulsating waves which will be useful later. 

\begin{theo}\label{th:tw}
Let ${\rm (A1)}$ and ${\rm (A2)}$ hold. Then the speed $c^*(e)$ of pulsating waves for \eqref{eq:main} is unique and  the map $e\mapsto c^*(e)$ is bounded in $\mathbb{S}^{d-1}$. Furthermore, if $c^*(e)\neq 0$, then the following statements hold true:
\begin{itemize}
\item [{\rm (i)}] $c^* (e)$ has the sign of $\left(\int_0^1\int_{(0,L_1)\times\cdots\times(0,L_d)} f(x,u)dxdu\right)$;
\item [{\rm (ii)}] the wave $U(t,x)$ is increasing $(${\it resp.} decreasing$)$ with respect to $t\in\R$ if  $c^* (e)>0$  $(${\it resp.} if $c^* (e)<0$$)$;
\item [{\rm (iii)}] the wave $U(t,x)$ is unique up to shifts in time.
\end{itemize}
\end{theo}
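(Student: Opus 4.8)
The plan is to establish each of the three claims by exploiting the monotonicity structure that bistable pulsating waves inherit from comparison principles. I would treat the uniqueness of the speed and boundedness first, then the sign characterization (i), the monotonicity (ii), and finally uniqueness up to time shifts (iii), since these build on one another.

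\medskip

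\noindent\textbf{Uniqueness of the speed and boundedness.}
First I would argue that for a fixed direction $e$ with $c^*(e)\neq 0$, any two pulsating waves must share the same speed. The standard tool here is a sliding/comparison argument: given two waves $U_1(t,x)=\Phi_1(x\cdot e-c_1 t,x)$ and $U_2(t,x)=\Phi_2(x\cdot e-c_2 t,x)$ with $c_1\neq c_2$, one compares them after a large time shift and uses the linear stability of $0$ and $1$ (condition (A1)) to trap the solutions between sub- and super-solutions near the two ends. The key input is that $\lambda_1(0)>0$ and $\lambda_1(1)>0$ give exponential-in-time attraction toward $0$ and $1$ respectively for $L$-periodic perturbations, which forces the profiles to line up and contradicts $c_1\neq c_2$. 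For boundedness of $e\mapsto c^*(e)$, I would set up a family of explicit super- and sub-solutions---of the form $\min(1,\,\xi\mapsto$ exponential tails$)$ or built from the stable-state eigenfunctions---whose construction does not degenerate as $e$ ranges over the compact sphere $\mathbb S^{d-1}$, yielding a uniform a priori bound $|c^*(e)|\le C$.

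\medskip

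\noindent\textbf{Sign characterization (i) and monotonicity (ii).}
For (i), I would integrate the wave equation in the moving frame. Writing the profile equation for $\Phi(\xi,y)$ with $\xi=x\cdot e-c^*(e)t$, multiplying by a suitable test function (the natural candidate being an eigenfunction or simply integrating $\partial_t U$ against a constant over one period and over $\xi\in\R$), and integrating over the slab $\R\times(0,L_1)\times\cdots\times(0,L_d)$, the divergence term and the flux terms at $\xi=\pm\infty$ should telescope using $\Phi(-\infty,\cdot)=1$, $\Phi(+\infty,\cdot)=0$. What survives is $c^*(e)$ times a positive normalizing integral, balanced against $\int_0^1\!\!\int_{(0,L_1)\times\cdots\times(0,L_d)} f(x,u)\,dx\,du$, so the sign of $c^*(e)$ matches the sign of this double integral. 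For (ii), I would fix $c^*(e)>0$ and show $\partial_t U>0$ by applying the parabolic strong maximum principle to $v=\partial_t U$, which solves the linearized equation; the boundary behavior at the two stable ends, combined with the fact that $v$ cannot vanish identically, gives a strict sign, and a sliding argument comparing $U(t+s,\cdot)$ with $U(t,\cdot)$ upgrades this to strict monotonicity. The decreasing case for $c^*(e)<0$ is symmetric.

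\medskip

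\noindent\textbf{Uniqueness up to time shifts (iii) and the main obstacle.}
For (iii), with the speed now pinned down and monotonicity in hand, I would run a classical sliding argument: given two non-stationary waves with the common speed $c^*(e)\neq 0$, I slide one in time until it touches the other from above (or below), then invoke the strong maximum principle together with the exponential decay at $\pm\infty$ coming from the linear stability of $0$ and $1$ to conclude the two coincide after the appropriate time shift. \emph{The main obstacle} I anticipate is handling the degeneracy at the transition between nonzero and zero speed and making the comparison arguments uniform in $e$ for the boundedness claim: when $c^*(e)$ is close to $0$ the change of variables $(t,x)\mapsto(x\cdot e-c^*(e)t,x)$ nearly fails to be invertible, so the profile $\Phi$ is no longer canonically determined, and one must be careful to phrase uniqueness of the \emph{speed} (which still holds) separately from uniqueness of the profile (which can fail, as noted for stationary waves via \cite{dhz2}). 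Controlling the exponential tails uniformly as $c^*(e)\to 0$, and ensuring the maximum-principle touching argument does not require a nondegenerate moving frame, is where the technical care will concentrate.
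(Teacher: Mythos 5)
Your toolkit (integration identity for the sign, sliding/comparison for monotonicity and uniqueness, non-degenerating sub/super-solutions for boundedness) is the same family of arguments the paper relies on; indeed the paper gives no self-contained proof, but defers to the sliding arguments of \cite{bh2} and \cite{contri} for (ii)--(iii) and speed uniqueness, to an integration argument from \cite{dhz1,ducrot} for (i), and to \cite[Lemma 2.7]{dg} for boundedness. However, your proposal contains a genuine logical gap in step (ii), and it propagates backwards into your first step. You claim that the parabolic strong maximum principle applied to $v=\partial_t U$, combined with the decay at the two stable ends and the fact that $v\not\equiv 0$, ``gives a strict sign.'' This is not a valid deduction: the strong maximum principle can only propagate a sign that $v$ already has (if $v\ge 0$ everywhere and vanishes at one point, then $v\equiv 0$ at earlier times); it cannot create a sign for a solution of the linearized equation that merely decays at both spatial ends, and a priori $\partial_t U$ could change sign. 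The sign of $\partial_t U$ is precisely the content of (ii), and it must come from the sliding argument itself: one first proves $U(t+s,\cdot)\ge U(t,\cdot)$ for all large $s$, by initializing a comparison with Fife--McLeod sub/super-solutions built from the stable eigenfunctions $\psi^{\pm}$ of \eqref{eigenvalue1}--\eqref{eigenvalue0}, then slides $s$ down to $0$; only after $\partial_t U\ge 0$ is known does the strong maximum principle upgrade this to $\partial_t U>0$. Your sentence runs these two ingredients in the reverse order.

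The second problem is circularity in your ordering. You establish uniqueness of the speed \emph{first}, by trapping one wave between Fife--McLeod-type sub/super-solutions attached to the other; but the construction of those sub/super-solutions (exactly as in the paper's Lemma \ref{sub-super-nonzero}) requires the constant $\beta=\min\,\partial_t U>0$ over the transition region, i.e.\ it requires the time-monotonicity (ii) that you prove only afterwards, and by the flawed argument above. The consistent order — and the one the paper follows — is: monotonicity (ii) first, via the sliding method of \cite{bh2,contri}; then the sub/super-solutions of Lemma \ref{sub-super-nonzero}; then uniqueness of the speed (Remark \ref{speed-unuqe}) and of the wave (iii). Two smaller points: in (i) the multiplier must be $\partial_\xi\Phi$ — multiplying the profile equation by $\partial_\xi\Phi$ and integrating over $\R\times(0,L_1)\times\cdots\times(0,L_d)$ annihilates the divergence term and yields
\begin{equation*}
c^*(e)\iint \bigl(\partial_\xi\Phi\bigr)^2\,d\xi\,dx \;=\; \int_{(0,L_1)\times\cdots\times(0,L_d)}\int_0^1 f(x,u)\,du\,dx,
\end{equation*}
whereas integrating against a constant or an eigenfunction, as you suggest, does not produce the stated double integral. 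Finally, your closing caveat about the degeneracy at $c^*(e)=0$ is well placed: the paper restricts (i)--(iii) to $c^*(e)\neq 0$ precisely because the sign property and wave uniqueness fail for stationary waves.
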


This theorem follows from some known results and techniques in several earlier papers. First of all, the sign property is proved by a simple integration argument on the equation satisfied by the wave profile $\Phi$ (we refer to \cite{dhz1,ducrot} for the details). Next, the monotonicity and uniqueness of pulsating waves and the uniqueness of wave speeds can be shown by a standard sliding argument. Indeed,  these properties are easy consequences of  \cite[Theorems 1.11, 1.12 and 1.14]{bh2} provided that, in addition to (A1) and (A2), there exists some small $\delta>0$ such that the function $u\mapsto f(x,u)$ is nondecreasing in $u\in [0,\delta]\cup[1-\delta,1]$. Without this extra condition, these properties remain valid and the verification is almost parallel to that in the time-periodic case (see \cite[Propositions 3.4 and 3.6]{contri}). We mention that the uniqueness of wave speeds also follows easily from the comparison arguments used in the present paper (see Remark \ref{speed-unuqe} below). To obtain the boundedness of the wave speed $(c^*(e))_{e\in\mathbb{S}^{d-1}}$, one can use the same arguments as in the proof of \cite[Lemma 2.7]{dg} with some obvious modifications. Lastly, it is worthy noting that the sign property and the uniqueness of pulsating waves are only valid for non-stationary waves; see counterexamples for stationary waves in \cite{dhz2,xin2}.

Our first main theorem is stated as follows.

\begin{theo} \label{Continuity of speeds}
Let ${\rm (A1)}$ and ${\rm (A2)}$ hold.  Then the function $e\in \mathbb{S}^{d-1}\mapsto c^*(e)$ is continuous.
\end{theo}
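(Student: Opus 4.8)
The plan is to argue by sequential compactness and to identify every subsequential limit of the speeds as $c^*(e_0)$, exploiting the fact that, by Theorem~\ref{th:tw}, the speed in each direction is unique even though the profile need not be. Fix $e_0\in\mathbb{S}^{d-1}$ and a sequence $e_n\to e_0$. By the sign property of Theorem~\ref{th:tw}(i), all nonzero speeds share the sign of $I:=\int_0^1\int_{(0,L_1)\times\cdots\times(0,L_d)}f(x,u)\,dx\,du$; if $I=0$ then $c^*\equiv 0$ and there is nothing to prove, so I may assume $I>0$ (the case $I<0$ being symmetric), whence $c^*(e)\ge 0$ for every $e$ and each corresponding wave is nondecreasing in $t$ by Theorem~\ref{th:tw}(ii). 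Since $(c^*(e))_e$ is bounded, it suffices to show that any subsequence along which $c^*(e_n)\to c_\infty$ forces $c_\infty=c^*(e_0)$; as $c^*(e_0)$ is a fixed number, this yields convergence of the full sequence and hence continuity at $e_0$.

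I would set up the limiting procedure directly at the level of the entire solutions $U_n(t,x)$ rather than the profiles $\Phi_n$, because the change of variables $(t,x)\mapsto(x\cdot e_n-c^*(e_n)t,x)$ degenerates as $c^*(e_n)\to 0$. Using a time shift (when $c^*(e_n)\ne 0$) together with a lattice shift $x\mapsto x+kL$, which preserves \eqref{eq:main}, I normalize each $U_n$ so that its $1/2$-level set at $t=0$ meets a fixed bounded region. Parabolic (Schauder) estimates then give, along a further subsequence, a locally uniform limit $U_\infty$ which is an entire solution of \eqref{eq:main} with $0\le U_\infty\le1$, is nondecreasing in $t$, and satisfies $U_\infty(0,x_*)=1/2$ for some bounded $x_*$; by the strong maximum principle $0<U_\infty<1$.

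The heart of the argument is a uniform control of the transition region. Using only the linear stability of $0$ and $1$ in (A1)—whose principal eigenvalues and eigenfunctions are properties of the periodic problem and hence independent of the direction—I would construct exponential sub- and supersolutions to prove that there exist $\delta_0\in(0,1/2)$ and $R>0$, both independent of $n$, with $\{\delta_0\le U_n\le 1-\delta_0\}\subseteq\{|x\cdot e_n-c^*(e_n)t|\le R\}$. The key point is that these constants can be chosen uniformly in $e_n$ and remain nondegenerate as $c^*(e_n)\to0$, precisely because the relevant decay rates are governed by $\lambda_1(0),\lambda_1(1)>0$ rather than by the speed. Passing to the limit, $\{\delta_0\le U_\infty\le1-\delta_0\}\subseteq\{|x\cdot e_0-c_\infty t|\le R\}$, so $U_\infty$ can neither collapse to a constant nor split across an intermediate periodic steady state.

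With this in hand I would identify $U_\infty$ in the two regimes. If $c_\infty>0$, the space–time periodicity $U_n(t+kL\cdot e_n/c^*(e_n),x+kL)=U_n(t,x)$ passes to the limit, so $U_\infty$ is a pulsating wave in direction $e_0$; monotonicity in $t$ gives limits $U_\pm$ as $t\to\pm\infty$ which are periodic steady states, and the slab containment forces $U_+\ge1-\delta_0$ and $U_-\le\delta_0$ everywhere, whence $U_\pm\in\{0,1\}$ by isolation of the stable states (again from $\lambda_1(0),\lambda_1(1)>0$). Thus $U_\infty$ connects $0$ and $1$ in direction $e_0$, and uniqueness of the speed in Theorem~\ref{th:tw} gives $c_\infty=c^*(e_0)$. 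If $c_\infty=0$, the periodicity relation becomes singular, so instead I pass to the time limit $U_+:=\lim_{t\to+\infty}U_\infty$; the slab containment, now with a fixed spatial slab $\{|x\cdot e_0|\le R\}$, together with the stability of $0$ and $1$, shows that $U_+$ is a stationary solution of \eqref{eq:main} with $U_+\to 1,0$ as $x\cdot e_0\to\mp\infty$, i.e.\ a stationary pulsating wave in direction $e_0$, whence $c^*(e_0)=0=c_\infty$. I expect the main obstacle to be exactly the uniform transition-region estimate and the clean identification of the limit in this degenerate $c_\infty=0$ regime, where the loss of uniqueness of profiles and the breakdown of the moving-frame description must be circumvented by working throughout with $U_n$ and exhibiting the limiting front through the time-asymptotics of $U_\infty$.
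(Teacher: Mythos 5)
The gap is at what you yourself call the heart of the argument: the uniform transition-region estimate $\{\delta_0\le U_n\le 1-\delta_0\}\subseteq\{|x\cdot e_n-c^*(e_n)t|\le R\}$ with $\delta_0,R$ independent of $n$. Exponential sub- and super-solutions built from the linear stability of $0$ and $1$ control only the tails of the waves, i.e.\ how fast $U_n$ approaches $0$ once it is already below $\delta_0$, and how fast it approaches $1$ once it is already above $1-\delta_0$; they give no bound at all on the length of the region where $U_n\in[\delta_0,1-\delta_0]$. Under (A1)--(A2) the equation may possess linearly stable $L$-periodic steady states strictly between $0$ and $1$ (the paper stresses this generality), and a priori the waves $U_n$ can stall near such a state for longer and longer stretches as $n\to\infty$, so that the normalized limit splits: it connects $0$ to some intermediate $u_+$ with $0<u_+<1$, or some intermediate $\tilde u_-$ to $1$, rather than $0$ to $1$. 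Your slab estimate is precisely the assertion that this splitting cannot happen, and there is no soft argument for it: attempting to prove it by contradiction leads straight back to the same splitting scenario. In effect, the proposal reduces the theorem to an unproven claim that is at least as strong as the theorem itself, together with a proof strategy (linear-stability tails, constants ``independent of the direction'') that cannot deliver it.

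This is also exactly where the paper's proof takes a different and essential turn. The paper proves no uniform width bound; instead it allows the limit $U_\infty$ (resp.\ $\tilde U_\infty$) to connect $0$ with an intermediate state $u_+$ (resp.\ $\tilde u_-$ with $1$), and then rules out each such scenario by comparison with the wave $U_0$ in the limiting direction $e_0$. The mechanism is the opposite of your assertion that the relevant decay rates are ``governed by $\lambda_1(0),\lambda_1(1)>0$ rather than by the speed'': by Lemmas \ref{exp-decay-hd-1} and \ref{exp-decay-hd-2} the decay rates are the roots $\underline{\mu}$ and $\bar{\mu}$ of $\lambda_1^-(\mu)=c^*\mu$ and $\lambda_1^+(\mu)=c^*\mu$, and their strict monotonicity in $c^*$ (Remark \ref{rem-decay}) is what allows one to order $U_\infty$ and $U_0$ near $x\cdot e_0=+\infty$ (Claim 2 in the paper's Step 1, and again in Step 4) and then propagate this ordering by the comparison principle; in Steps 2 and 3 the limit is instead swept by the Fife--McLeod sub-solution of Lemma \ref{sub-super-nonzero} built from $U_0$. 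These ingredients---speed-dependent decay comparison plus comparison against $U_0$---are what substitute for your missing slab estimate, and none of them appear in the proposal. The remaining soft parts of your argument (normalization, parabolic compactness, isolation of $0$ and $1$, uniqueness of the speed from Theorem \ref{th:tw}) do match the paper, but they are not where the difficulty lies.
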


The proof is inspired by \cite[Lemma 2.5]{dg} which is concerned with the continuity of pulsating wave speeds with respect to small perturbations on the nonlinearity $f$. It relies on the exponential decay rates when the pulsating waves approach the limiting steady states $0$ and $1$.  However, some parts of \cite[Lemma 2.5]{dg} still require the pulsating waves to be non-stationary. In our proof, we remove this condition completely by some new comparison arguments. Moreover, the same strategy can be applied to improve \cite[Lemma 2.5]{dg}, as remarked below precisely.

\begin{rmk} 
Let ${\rm (A1)}$ and ${\rm (A2)}$ hold.  Let  $(A_n)_{n\in\N} \subset C^{1,\alpha}(\R^d)$ be a sequence of $L$-periodic symmetric matrix satisfying \eqref{uni-elliptic} and let  $(f_n)_{n\in\N} \subset C^1(\R^{d+1})$ be a sequence of functions  such that each $f_n$ is $L$-periodic in the first variable and satisfies ${\rm (A1)}$.  Assume that
$A_n \to  A$ in $C^{1,\alpha}(\R^d)$ and   $f_n\to f$ in $C^1(\R^{d+1})$ as $n\to \infty$, and that for each $n\in\N$ and $e\in\mathbb{S}^{d-1}$, the equation
$$\partial_t u  = \nabla \cdot (A_n(x)\nabla u) + f_n(x,u), \quad t \in \R \, , \ x \in \R^d,$$
admits a pulsating wave connecting $0$ and $1$ with speed $c^*_n(e)\in\R$.
 Then $c^*_n(e)\to c^*(e)$ as $n\to +\infty$.
\end{rmk}

Next, we present the result on the existence of pulsating waves in rapidly oscillating environments. More precisely, we consider the following equation  
\begin{equation}\label{eql}
\partial_t u  = \nabla \cdot (A(x/l)\nabla u) + f(x/l,u), \quad t \in \R \, , \ x \in \R^d,
\end{equation}
with $l>0$. It is clear that $A(x/l)$ and  $f(x/l,\cdot)$ are $lL$-periodic in $x\in\R^d$ with  
$$lL:=(lL_1,\cdots,lL_d).$$ 
We show here that, for any direction $e\in \mathbb{S}^{d-1}$,  \eqref{eql} admits a pulsating wave connecting $0$ and $1$ when $l$ is small. To do so, we need the assumption that $0$ and $1$ are uniformly (in $x$) stable zeros of $f(x,\cdot)$ in the following sense:
\begin{itemize}
\item [{\bf (A3)}] $f(\cdot,0)=f(\cdot,1)=0$ and there exists $\gamma_0>0$ such that 
\begin{equation*}
\partial_uf(x,0)\le-\gamma_0  \quad \hbox{and} \quad \partial_uf(x,1)\le-\gamma_0 \,\,\,\hbox{ for all }\,\,\,x\in\R^d.
\end{equation*}
\end{itemize}
It is easily seen that (A3) is stronger than (A1), and it implies in particular that $0$ and $1$ are two linearly stable $lL$-periodic steady states of \eqref{eql} for all $l>0$.

To introduce our existence result, we need to set some notations. 
Denote by $\T^d:=\R^d/(L\Z^d)$ the $d$-dimensional torus, and denote by $\bar{f}:\R\to\R$ the integral average of the function $f$ with respect to the first variable over $\T^d$, that is, 
\begin{equation}\label{average} 
\bar{f}(u)= -\kern-10pt\int_{\T^d}f(x,u)dx\,\,\hbox{ for }\,\,u\in\R. 
\end{equation}
Clearly, $0$ and $1$ are two zeros of $\bar{f}$, and it follows from (A3) that $\max\{\bar{f}'(0),\bar{f}'(1)\}\leq -\gamma_0$. 
Let $A_0$ be a constant defined by
\begin{equation}\label{defA_0}
A_0 = -\kern-10pt\int_{\T^d}eA(x)(\nabla\chi(x)+e)dx,
\end{equation}
where $\chi(\cdot)\in C^2(\R^d)$ is the unique (up to a constant) solution of the equation 
\begin{equation}\label{de-chi}
\nabla \cdot(A(x)(\nabla \chi(x)+e))=0 \,\,\hbox{ in }\,\, \R^d,  \quad  \chi(x) \hbox{ is $L$-periodic in } \R^d.
\end{equation}
Notice that $A_0$ is a positive constant depending on the direction $e$. \footnote{The positivity of $A_0$ follows from the uniform ellipticity of $A(x)$. Indeed, multiplying the first equation of \eqref{de-chi} by $\chi(x)$ and integrating by parts over $\T^d$ gives $-\kern-7pt\int_{\T^d} \nabla \chi(x) A(x)(\nabla \chi(x)+e)dx=0$. This together with the definition of $A_0$ \eqref{defA_0} implies that $A_0=-\kern-7pt\int_{\T^d} (\nabla\chi(x)+e)A(x)(\nabla\chi(x)+e)dx$, which is positive due to \eqref{uni-elliptic}. }  Our theorem below shows that the existence of non-stationary pulsating wave of \eqref{eql} when $l$ is small is determined by the the existence of non-stationary traveling wave of the following homogenous problem
\begin{equation}\label{onehom}
v_t=A_0v_{zz}+\bar{f}(v),\ t\in\R,\ z\in\R. 
\end{equation}

\begin{theo}\label{thhomo}
Let {\rm (A3)} hold and let $e\in \mathbb{S}^{d-1}$ be arbitrary. Assume that equation \eqref{onehom} admits a traveling wave $v(t,z)=\phi_0(z-c_0t)$ with $c_0\neq 0$ and the limiting conditions $\phi_0(-\infty)=1$ and $\phi_0(+\infty)=0$. Then there is $l^*:=l^*(e)>0$ such that for any $0<l < l^*$, equation \eqref{eql} admits a unique (up to shifts in time) pulsating wave 
\begin{equation}\label{exist-small}
u_l(t,x)=\phi_l(x\cdot e-c^*_lt,x/l)
\end{equation}
connecting $0$ and $1$ with speed $c^*_l:=c^*_l(e)\neq 0$, where $\phi_l$ is $L$-periodic in the second variable. Furthermore, there holds $c^*_l\to c_0$ and up to translation of $\phi_0$, $\phi_l(\xi,y)-\phi_0(\xi)\to 0$ in $H^1(\R\times [0,L_1]\times\cdots \times [0,L_d])$ as $l \to0^+$.
\end{theo}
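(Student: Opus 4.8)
The plan is to treat \eqref{eql} as a singular homogenization problem and to locate the pulsating wave as an $O(l)$ perturbation of the homogenized front $\phi_0$. Writing a candidate wave in two-scale form $u_l(t,x)=\phi_l(\xi,y)$ with $\xi=x\cdot e-c\,t$ and $y=x/l$, a direct computation recasts \eqref{eql} as
\[
-c\,\partial_\xi\phi_l=(e\cdot Ae)\,\partial_{\xi\xi}\phi_l+\tfrac1l\big[2(Ae)\cdot\nabla_y\partial_\xi\phi_l+(\nabla_y\cdot(Ae))\,\partial_\xi\phi_l\big]+\tfrac{1}{l^2}\nabla_y\cdot(A\nabla_y\phi_l)+f(y,\phi_l),
\]
with $A=A(y)$. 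Inserting $\phi_l=\phi_0(\xi)+l\,\chi(y)\,\phi_0'(\xi)+l^2\phi_2(\xi,y)+\cdots$ and matching powers of $l$, the $O(l^{-1})$ balance is solved exactly by the cell corrector $\chi$ of \eqref{de-chi}, and after integrating the $O(1)$ balance over $\T^d$ and using \eqref{de-chi}, \eqref{defA_0} and \eqref{average}, it collapses to $A_0\phi_0''+c\,\phi_0'+\bar f(\phi_0)=0$, i.e.\ exactly the profile equation of \eqref{onehom} at $c=c_0$. This formal computation both pins down the expected limit $(c_0,\phi_0)$ and dictates the shape of the sub/supersolutions below.

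First I would construct, for each small $l>0$, an ordered pair of pulsating super- and sub-solutions of \eqref{eql} of the form
\[
\overline{U}_l(t,x)=\phi_0(\xi^+)+l\,\chi(x/l)\,\phi_0'(\xi^+)+q(t),\qquad \underline{U}_l(t,x)=\phi_0(\xi^-)+l\,\chi(x/l)\,\phi_0'(\xi^-)-q(t),
\]
with $\xi^\pm=x\cdot e-(c_0\mp\delta_l)t\pm\eta_l$, where the speed shift $\delta_l$, the spatial shift $\eta_l$, and a small correction $q(t)$ of Fife--McLeod type are chosen so that the $O(l)$ interior residual left by the corrector ansatz is absorbed and so that the exponential decay of $\phi_0'$ together with the uniform nondegeneracy of $0$ and $1$ from \textbf{(A3)} controls the residual in the two boundary layers where $\phi_l$ leaves the regime $0<\phi_l<1$. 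The uniform stability in \textbf{(A3)} is precisely what makes $q$ decay and keeps $\overline U_l\ge\underline U_l$, with $\delta_l,\eta_l\to0$ as $l\to0^+$.

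Next I would deduce existence. Since $\bar f$ is only assumed to carry a front and may well be multistable, I cannot invoke a bistable-structure existence theorem; instead I would obtain the wave directly from the ordered pair, trapping the solution of \eqref{eql} issued from a front-like datum between $\underline U_l$ and $\overline U_l$ and extracting a pulsating-wave limit by the limiting argument in the spirit of Fife--McLeod (its spatially periodic analogue), the comparison principle forcing the limiting speed $c^*_l$ into $[c_0-\delta_l,\,c_0+\delta_l]$. As $\delta_l\to0$ and $c_0\neq0$, we get $c^*_l\neq0$ for $l$ small, whence uniqueness up to time shift and the sign property follow from Theorem \ref{th:tw}. Finally, armed with existence, I would close the convergence statement: uniform-in-$l$ parabolic estimates together with the exponential decay furnished by \textbf{(A3)} give compactness of $(\phi_l)$; an averaging/two-scale argument shows any limit is $y$-independent and solves the homogenized profile equation, so by uniqueness of $\phi_0$ up to shift the whole family converges, and an energy identity upgrades this to $\phi_l-\phi_0\to0$ in $H^1(\R\times[0,L_1]\times\cdots\times[0,L_d])$, with $c^*_l\to c_0$.

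The main obstacle is the sub/supersolution construction: because the operator is singularly scaled, with $l^{-2}$ and $l^{-1}$ terms, an honest error estimate must verify that $\chi$ cancels the full $O(l^{-1})$ obstruction and that the leftover $O(l)$ interior residual, together with the mismatches in the two boundary layers, is dominated uniformly in $l$ by the speed and shift corrections. A secondary difficulty, absent in the purely bistable setting, is that \eqref{eql} may carry intermediate stable periodic steady states, so the existence argument must produce a front connecting $0$ and $1$ directly from the trapping pair rather than from a monotone-iteration scheme tailored to bistability; keeping the trapped speed inside $[c_0-\delta_l,c_0+\delta_l]$ and excluding the degenerate case $c^*_l=0$ is exactly where the convergence $c^*_l\to c_0\neq0$ is indispensable.
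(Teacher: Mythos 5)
Your proposal takes a genuinely different route from the paper, which never touches the Cauchy problem: there, the profile equation \eqref{pulsolu} is recast as a zero problem $G(v,c,l)=(0,0)$ for the perturbation $v=\phi_l-\phi_0-l\chi\phi_0'$, and the implicit function theorem is applied at $(0,c_0,0)$, the key inputs being uniform invertibility of the scaled operators $M_{c,l}$, the homogenization limit $M_{c_n,l_n}^{-1}\to M_{c,0}^{-1}$ (Lemma \ref{continuem}, where $\chi$ enters through the test functions $\phi+l_n\chi\phi'$), and invertibility of $\partial_{(v,c)}G(0,c_0,0)$; this yields existence, uniqueness, $c^*_l\to c_0$ and the $H^1$ convergence in one stroke. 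Your two-scale ansatz correctly identifies the corrector $\chi$ and the homogenized limit, but your existence step has a gap that comparison arguments cannot close. Since the interior residual of the ansatz forces a speed correction $\delta_l\gtrsim l$, your trapping only confines the transition zone of the solution to a window whose width grows like $2\delta_l t$, i.e.\ linearly in time. That is exactly enough room for the solution to develop a propagating terrace: several fronts separated by intermediate stable $lL$-periodic steady states, all travelling with speeds inside $[c_0-\delta_l,c_0+\delta_l]$, are fully compatible with your barriers. Because Theorem \ref{thhomo} is designed to allow multistable $\bar{f}$, so that \eqref{eql} may genuinely possess such intermediate stable states (see the example after Proposition \ref{condi}), this scenario cannot be excluded by comparison alone; the known dynamical existence proofs \cite{dgm,fz,gr,ducrot} invoke the bistable structure (linear instability of all intermediate states) precisely at this point. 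You flag this difficulty yourself, but ``produce a front directly from the trapping pair'' is the missing proof, not an argument; note also that the ``limiting argument in the spirit of Fife--McLeod'' does not exist for existence --- in \cite{fm1} the wave is obtained by phase-plane analysis, and the sub/supersolutions serve only for stability.

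Two further problems would sink the construction as written even before that stage. First, after the first-order corrector the interior residual is \emph{not} $O(l)$ pointwise: plugging $\phi_0(\xi)+l\chi(y)\phi_0'(\xi)$ into \eqref{eql} and using the profile equation of \eqref{onehom} leaves, at order one, the term $\big[eA(e+\nabla_y\chi)+\nabla_y\cdot(Ae\chi)-A_0\big]\phi_0''+\big[f(y,\phi_0)-\bar{f}(\phi_0)\big]$, which has zero mean in $y$ but is of size $O(1)$ pointwise. This mean-zero obstruction is harmless in the paper's $L^2$-based framework (it is absorbed by $M_{c,l}^{-1}$, cf.\ Lemma \ref{continuem}), but a pointwise sub/supersolution cannot ignore it; you would need a second-order corrector $l^2\phi_2(\xi,y)$ solving a cell problem with this right-hand side before any $O(\delta_l)$ speed shift could dominate what remains. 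Second, your speeds are assigned backwards: with $c_0>0$ and $\phi_0$ decreasing, the leading residual of $\phi_0(\xi)+q$ is $(c_0-c)\phi_0'+q'-\bar{f}'(\phi_0)q$, and since $\phi_0'<0$ a supersolution needs $c\ge c_0+Cl$ (faster) while a subsolution needs $c\le c_0-Cl$ (slower); as written, your pair loses its ordering in finite time. These two issues are repairable with standard homogenization technology, but the terrace obstruction of the previous paragraph is not repairable within your framework --- it is exactly what the paper's implicit-function-theorem strategy is built to bypass.
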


This theorem extends the  one-dimensional existence result \cite[Theorem 1.2]{dhz1} to the multi-dimensional case, and the proof shares the similarities, which replies on the application of the implicit function theorem in some appropriate Banach spaces. Similar idea was also appeared in an earlier paper \cite{he2} which is concerned with spatially homogeneous equations in periodically perforated domains.

Notice that the quantity $l^*$ provided by Theorem \ref{thhomo} may depend on the direction of propagation. It is unclear whether $l^*$ can be improved to be uniform in $e\in \mathbb{S}^{d-1}$. 
In the case $d=2$, for the special equation 
\begin{equation}\label{eq-d2}
\partial_t u  = \Delta u +u(u-a(x/l))(1-u) , \quad t \in \R \, , \ x \in \R^2,
\end{equation}
where $0<a(x)<1$ is $L$-periodic in $x\in\R^2$, a uniform $l^*$ is obtained in \cite[Theorem 1.10]{ducrot}. This is reached by showing that equation \eqref{eq-d2} admits a bistable structure when $l$ is small. But, the problem on the sign of the wave speed $c_l^*(e)$ remains open (see \cite[Remark 1.11]{ducrot}). Thanks to Theorem \ref{thhomo}, we can conclude  that 
\begin{equation}\label{sign-a}
sign(c_l^*(e))= sign\left(\frac{1}{2}--\kern-10pt\int_{\T^d}a(x)dx\right) \,\,\hbox{ for all }\,\, e\in\mathbb{S}^{d-1},\,\, 0<l<l^*.   
\end{equation} 
Indeed, if $-\kern-9pt\int_{\T^d}a(x)dx\neq 1/2$, then it is well known (see e.g. \cite{fm1}) that $c_0$ has the sign of $1/2--\kern-9pt\int_{\T^d}a(x)dx$, 
where $c_0$ is the traveling wave speed of \eqref{onehom} with $A_0=1$ and $\bar{f}(u)=u(u--\kern-9pt\int_{\T^d}a(x)dx)(1-u)$. This together with the convergence of $c_l^*(e)$ as $l\to 0^+$ stated in Theorem \ref{thhomo} immediately implies \eqref{sign-a}.  On the other hand, if $-\kern-9pt\int_{\T^d}a(x)dx= 1/2$, then $\int_0^1\int_{(0,L_1)\times\cdots\times(0,L_d)} u(u-a(x))(1-u)dxdu=0$. It follows from Theorem \ref{th:tw} (i) that $c_l^*(e)=0$, and hence, \eqref{sign-a} remains valid. 

The assumption on the existence of non-stationary traveling wave of \eqref{onehom}  plays an essential role in showing Theorem \ref{thhomo}. Below, we give some sufficient conditions ensuring the existence of this homogeneous wave. Setting 
\begin{equation}\label{de-F}
F(u):=\int_0^u\bar{f}(s)ds \,\, \hbox{ for }\,\,  u\in [0,1],
\end{equation}
we have the following proposition. 
\begin{prop}\label{condi}
Assume that $\bar{f}(0)=\bar{f}(1)=0$. Then, equation \eqref{onehom} admits a traveling wave connecting $0$ and $1$ with speed $c_0\neq0$ provided that one of the following conditions holds:
\begin{itemize}
\item [{\rm (a)}] $F(1)<0$ and $\bar{f}(u)< 0$ for all $u\in \left\{u\in (0,1):\, F(u)> F(1)\right\}$;
\item [{\rm (b)}] $F(1)>0$ and $\bar{f}(u)> 0$ for all  $u\in \left\{u\in (0,1): \,F(u)> F(0)\right\}$. 
\end{itemize}
Furthermore, $c_0>0$ $($resp. $c_0<0$$)$ if {\rm (b)} $($resp. {\rm (a)}$)$ holds true.  
\end{prop}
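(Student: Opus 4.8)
The plan is to first make two reductions and then carry out a phase-plane shooting argument. After rescaling $z\mapsto z/\sqrt{A_0}$ we may take $A_0=1$, so that the profile of a traveling wave $v(t,z)=\phi_0(z-c_0t)$ solves
\[
\phi_0''+c_0\phi_0'+\bar f(\phi_0)=0,\qquad \phi_0(-\infty)=1,\ \ \phi_0(+\infty)=0 .
\]
Multiplying by $\phi_0'$ and integrating over $\R$ gives the energy identity $c_0\int_\R(\phi_0')^2\,d\xi=F(1)-F(0)=F(1)$, so that $\mathrm{sign}(c_0)=\mathrm{sign}(F(1))$; this already yields the asserted sign of the speed once existence is known. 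Next I would reduce case (a) to case (b) by the reflection $u\mapsto 1-u$: setting $\tilde f(u):=-\bar f(1-u)$ one checks $\tilde F(u)=F(1-u)-F(1)$, whence $\tilde F(1)=-F(1)>0$ and the set condition in (a) turns into the set condition in (b) for $\tilde f$; moreover $\phi_0(\xi)=1-\tilde\phi_0(-\xi)$ maps a front for $\tilde f$ with speed $\tilde c_0>0$ to a front for $\bar f$ with speed $c_0=-\tilde c_0<0$. Hence it suffices to treat case (b) and produce a monotone decreasing front with $c_0>0$.

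For case (b) I would seek $\phi_0$ monotone and pass to the phase plane by writing $p=-\phi_0'\ge 0$ as a function of $\phi\in[0,1]$. Then $p$ solves the first-order equation $p\,\frac{dp}{d\phi}=c\,p-\bar f(\phi)$, equivalently, with $P:=\tfrac12 p^2$, $\frac{dP}{d\phi}=c\sqrt{2P}-\bar f(\phi)$; a front corresponds to a solution with $P>0$ on $(0,1)$ and $P(0)=P(1)=0$, and the unknown speed $c>0$ is fixed by these two boundary conditions. I would run a shooting argument in $c$, integrating from $\phi=1$, $P=0$ towards $\phi=0$. By the comparison principle $P(\cdot;c)$ is strictly decreasing in $c$. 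At $c=0$ one has $P(\phi;0)=F(1)-F(\phi)$; here condition (b) is used to show that $\{F>0\}\cap(0,1)$ is a single interval $(u_0,1)$ on which $F'=\bar f>0$, so $F(\phi)<F(1)$ on $[0,1)$ and therefore $P(\cdot\,;0)>0$ on $(0,1)$ with $P(0;0)=F(1)>0$ (overshoot), while for $c$ large the over-damped trajectory reaches $P=0$ at some $\phi=\theta(c)>0$ (undershoot). Selecting the threshold value $c_0>0$ separating overshoot from undershoot yields a trajectory with $P(0;c_0)=0$, i.e. the desired connecting orbit and hence the front; by the energy identity $c_0>0$.

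The main obstacle is to show that the threshold trajectory genuinely connects $1$ to $0$ — that it stays in $\{P>0\}$ throughout $(0,1)$ and is not trapped at an interior zero $\theta\in(0,1)$ of $\bar f$. This is exactly where condition (b) does the essential work: at any interior zero $\theta$ one must have $F(\theta)\le 0$ (otherwise $\bar f(\theta)>0$, contradicting $\bar f(\theta)=0$), which controls the potential landscape so that the orbit can thread past every intermediate equilibrium and descend all the way to $\phi=0$. A secondary technical point is the behavior near the endpoints: condition (b) forces $\bar f>0$ just below $1$, so the orbit leaves $(1,0)$ even when $\bar f'(1)=0$, while near $0$ both the linearly stable case ($\bar f\le 0$ near $0^+$) and the degenerate or monostable-type case ($\bar f>0$ near $0^+$, in which a whole interval of admissible speeds exists and one simply picks any $c_0>0$) must be handled, together with the uniform convergence $\phi_0(\pm\infty)\in\{0,1\}$ and the associated decay estimates. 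Alternatively, once these potential-landscape inequalities are verified, one may invoke the classical multistable front theory of Fife--McLeod \cite{fm1}, whose structural hypotheses conditions (a) and (b) are precisely designed to meet.
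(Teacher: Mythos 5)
Your preliminary reductions are correct: the energy identity $c_0\int_\R(\phi_0')^2\,d\xi=F(1)-F(0)$ gives the sign of the speed, the reflection $u\mapsto 1-u$ does convert (a) into (b), and the structural facts you extract from (b) (that $\{F>F(0)\}\cap(0,1)$ is a single interval $(u_0,1)$ on which $\bar f>0$, hence $u=1$ is the strict global maximizer of $F$, and that every interior zero $\theta$ of $\bar f$ satisfies $F(\theta)\le 0$) are all true. But the step you yourself call ``the main obstacle'' --- that the threshold trajectory of the shooting is not trapped at an interior zero of $\bar f$ --- is a genuine gap, and the inequality $F(\theta)\le 0$ does \emph{not} close it. If the critical orbit emanating from $(1,0)$ terminates at $(\theta,0)$ with $\bar f(\theta)=0$, the energy identity applied to that connection only says $\mathrm{sign}(c_0)=\mathrm{sign}\bigl(F(1)-F(\theta)\bigr)>0$, which is perfectly consistent with (b): a wave running from $1$ down to an interior zero with positive speed is not forbidden, and for multistable $\bar f$ satisfying (b) such connections genuinely exist. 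What (b) actually forbids is a wave connecting $0$ to an interior zero $\theta$ with \emph{positive} speed, since for that wave the identity gives $\mathrm{sign}(c)=\mathrm{sign}(F(\theta))\le 0$. Your shooting from $(1,0)$ produces the top connection, precisely the one to which this argument does not apply; to bring (b) to bear you would have to continue the construction below $\theta$ (shoot again, compare the speeds of the stacked pieces, iterate), i.e., essentially rebuild the propagating-terrace theory from scratch in the phase plane.

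This is exactly how the paper's proof is organized, except that the hard part is quoted rather than reproved: under (b), $u=1$ is the unique maximizer of $F$ and an isolated zero of $\bar f$, so Pol\'a\v{c}ik's theorem (Lemma \ref{exist-terace}, from \cite{po2}) yields a propagating terrace connecting $0$ and $1$ whose speeds are all positive; if the terrace had $N\ge2$ waves, its \emph{lowest} wave would connect $0$ to an interior zero $\tilde p_{N-1}$ with speed $\tilde c_N>0$, while the energy identity forces $\mathrm{sign}(\tilde c_N)=\mathrm{sign}(F(\tilde p_{N-1}))\le 0$, a contradiction; hence $N=1$. Two further points about your proposal. First, the fallback ``invoke Fife--McLeod \cite{fm1}'' does not cover the stated generality: Proposition \ref{condi} deliberately allows $\bar f'(0)=\bar f'(1)=0$, degenerate interior zeros, even infinitely many zeros, whereas the classical Fife--McLeod multistable theory assumes finitely many nondegenerate zeros, and your claim that (a)/(b) match their hypotheses is asserted, not verified. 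Second, your ``undershoot for large $c$'' claim is false as stated whenever $\bar f\ge0$ on $[0,1]$ (monostable or ignition-type profiles, which (b) permits): an orbit descending from $(1,0)$ can never reach $p=0$ at a point where $\bar f>0$ (it would create a local maximum of a decreasing profile), so no undershoot occurs there; you flag the monostable sub-case but the ignition-type sub-case, where the orbit can only die tangentially on a plateau of zeros, also needs separate treatment.
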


Notice that the above condition does not require $\bar{f}'(0)<0$ and $\bar{f}'(1)<0$. Indeed, it covers a wild class of nonlinearities that include not only standard monostable, bistable and combustion types but also multistable type, and even the nonlinearities with infinite many zeros between $0$ and $1$. This in particular implies that Theorem \ref{thhomo} allows equation \eqref{eql} to have multiple stable steady states between $0$ and $1$.  For instance, let the nonlinearity  $f$ be independent of $x$, and have the form
$$f(u)=u\left(u-\frac{1}{8}\right) \left(u-\frac{1}{4}\right)\left(u-\frac{3}{8}\right)(1-u)\,\,\hbox{ for }\,\,u\in[0,1].$$
It is straightforward to check that $\bar{f}(u)=f(u)$ satisfies condition (b) of Proposition \ref{condi}, and hence by Theorem \ref{thhomo}, for each direction $e\in\mathbb{S}^{d-1}$, equation \eqref{eql} admits a unique (up to shifts in time) pulsating wave connecting $0$ and $1$ with speed $c^*_l(e)>0$ when $l$ is small. On the other hand, it is easily seen that $u\equiv 1/4$ is a stable steady state of \eqref{eql}.

\section{Continuity of wave speeds}

This section is devoted to the proof of Theorem \ref{Continuity of speeds} under the assumptions (A1) and (A2). The proof relies on a Fife-McLeod type sub- and super-solutions and the exponential decay of pulsating waves when they approach the stable limiting states. For clarity, we first show these properties in Section \ref{pre-properties}, and then carry out the main proof in Section \ref{main-proof}.

\subsection{Preliminaries on the properties of pulsating waves}\label{pre-properties}

Throughout this subsection, the direction $e\in \mathbb{S}^{d-1}$ is fixed, and we write $c^*$ instead of $c^*(e)$ for convenience, and denote by $U$ the pulsating wave of \eqref{eq:main} connecting $0$ and $1$ in the direction $e$.  We first construct a pair of  sub- and super-solutions of \eqref{eq:main} when $U=U(t,x)$ is not stationary.

Recall that $0$ and $1$ are two linearly stable steady states of \eqref{eq:main} with respect to spatially $L$-periodic solutions. Let $\lambda^+$ and $\lambda^-$ be, respectively, the principal eigenvalues of
\begin {equation}\label{eigenvalue1}
-\nabla \cdot (A\nabla\psi)-\partial_uf(x,1)\psi=\lambda\psi  \,\hbox{ in } \R^d,\quad  \psi>0 \,\hbox{ in }\, x\in\R^d,\quad \psi \,\hbox{ is $L$-periodic},
\end {equation}
and
\begin {equation}\label{eigenvalue0}
-\nabla \cdot (A\nabla\psi)-\partial_uf(x,0)\psi=\lambda\psi  \,\hbox{ in } \R^d,\quad  \psi>0 \,\hbox{ in }\, x\in\R^d,\quad \psi \,\hbox{ is $L$-periodic}.
\end {equation}
It is clear that $\lambda^{\pm}>0$. Denote by $\psi^\pm(x)$ the corresponding positive periodic eigenfunctions. Since  $\psi^\pm$ are unique up to multiplication by a positive constant, for definiteness, we normalize them by requiring
\begin{equation}\label{normalization}
\|\psi^+\|=1,\quad  \|\psi^-\|=1,
\end{equation}
where $\|\cdot\|$ denotes the $L^\infty$-norm in $C(\R^d)$. Let $\rho\in C^2(\R,[0,1])$ be a function satisfying
\begin{equation}\label{rho}
\rho(\xi)=0 \,\,\,\, \hbox{in}\, \,\,[2,\infty),\,\, \,\,\rho(\xi)=1\,\,\,\, \hbox{in}\,\, \,(-\infty,0],\,\,\,\, -1\leq \rho'(\xi)\leq0\,\,\,\, \hbox{and} \,\,\,\,|\rho''(\xi)|\leq1 \,\,\hbox{in} \,\,\R.
\end{equation}
Our sub- and super-solutions are stated as follows.

\begin{lem}\label{sub-super-nonzero}
Assume that $c^* \neq 0$.  Then, there exist $\varepsilon_0>0$,  $\mu>0$ and $K\in\R$ (which has the sign of $c^*$) such that for any $\varepsilon\in (0,\varepsilon_0] $ and $k\in\Z^d$, the functions $u^+(t,x)$ and  $u^-(t,x)$ defined by
\begin{equation*}
\left.\begin{array}{ll}
 u^\pm(t,x):= \!\!\!\!& U(t\pm\varepsilon K(1- e^{-\mu t}),x+ kL)\pm\varepsilon e^{-\mu t}(\rho (x\cdot e-c^*t)\psi^+(x) \vspace{5pt}\\
&\qquad +(1-\rho (x\cdot e-c^*t))\psi^-(x))   \quad  \hbox{in} \quad (0,\infty)\times\R^d,
\end{array}\right.
\end{equation*}
are, respectively, a super-solution and a sub-solution of \eqref{eq:main}.
\end{lem}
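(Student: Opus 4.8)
The plan is to verify directly that $u^+$ is a super-solution by plugging it into the operator $\mathcal{N}[u]:=\partial_t u-\nabla\cdot(A\nabla u)-f(x,u)$ and showing $\mathcal{N}[u^+]\ge0$, with the argument for $u^-$ being symmetric. Write $u^+=U(s,x+kL)+\varepsilon e^{-\mu t}\Psi(t,x)$, where $s=s(t):=t+\varepsilon K(1-e^{-\mu t})$ and $\Psi(t,x):=\rho(x\cdot e-c^*t)\psi^+(x)+(1-\rho(x\cdot e-c^*t))\psi^-(x)$. Since $U$ solves \eqref{eq:main} and the equation is $L$-periodic, the shift $U(s,x+kL)$ satisfies the equation in the $x$-variable up to the extra time-derivative factor $s'(t)=1+\varepsilon K\mu e^{-\mu t}$. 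So the first task is to compute $\mathcal{N}[u^+]$ and collect the linear-in-$\varepsilon$ terms against the $O(\varepsilon^2)$ remainder coming from the Lipschitz modulus of $\partial_u f$.

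The key computation splits into the region far from the interface, where $U$ is close to $0$ or to $1$, and the bounded interfacial region. Far ahead ($x\cdot e-c^*t$ large, $U\approx0$), the dominant balance is governed by \eqref{eigenvalue0}: the term $\varepsilon e^{-\mu t}\psi^-$ contributes $(\lambda^-+\partial_uf(x,0)-\partial_uf(x,U))\,\varepsilon e^{-\mu t}\psi^-$ plus the $-\mu\,\varepsilon e^{-\mu t}\psi^-$ from differentiating $e^{-\mu t}$; since $\lambda^->0$ and $\partial_uf(x,0)-\partial_uf(x,U)=O(U)$ is small when $U$ is small, one chooses $\mu$ small enough that $\lambda^--\mu>0$ dominates, yielding a nonnegative contribution. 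The symmetric estimate holds far behind ($U\approx1$) using $\lambda^+>0$ and \eqref{eigenvalue1}. In the interfacial region the cutoff $\rho$ mixes $\psi^+$ and $\psi^-$, generating commutator terms involving $\rho'$, $\rho''$ and the first-order coupling between $\rho$ and $A\nabla\psi^\pm$; these are bounded on the compact set where $0\le\rho\le1$ is nonconstant, so they are absorbed by exploiting that $U_s=\partial_s U$ is bounded away from $0$ there (by the strict monotonicity from Theorem \ref{th:tw}(ii), which forces $\partial_s U\neq0$ uniformly on compacta). This is precisely where the parameter $K$ enters: the perturbation of the time-argument produces the term $\varepsilon K\mu e^{-\mu t}\,\partial_s U$, and choosing $K$ with the sign of $c^*$ (equivalently with the sign of $\partial_s U$) makes this term have a favorable sign, large enough to control the bounded interfacial errors for small $\varepsilon$.

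The hard part will be organizing the interfacial estimate so that all three small quantities — $\varepsilon$ (size of perturbation), $\mu$ (decay rate), and the width of the transition region set by $\rho$ — are chosen in a consistent order without circularity. Concretely, I expect to first fix $\mu\in(0,\min\{\lambda^+,\lambda^-\})$ small using only the exponential-decay behavior of $U$ toward $0$ and $1$ (so the two far-field regions are handled), then fix $K$ and finally shrink $\varepsilon_0$ so that the $O(\varepsilon^2)$ Lipschitz remainder and the bounded $\rho',\rho''$ contributions are dominated by the linear-in-$\varepsilon$ good terms $\varepsilon e^{-\mu t}(\lambda^\pm-\mu)\psi^\pm$ and $\varepsilon K\mu e^{-\mu t}\partial_s U$. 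Throughout I would use the normalization \eqref{normalization}, the bounds \eqref{rho} on $\rho$, and the positivity $\psi^\pm>0$ together with the uniform ellipticity \eqref{uni-elliptic} and the uniform Lipschitz continuity of $u\mapsto\partial_uf(x,u)$ to make every constant explicit and independent of $k\in\Z^d$ (the latter follows from $L$-periodicity, which makes the shift by $kL$ harmless). The super/sub-solution inequalities then hold on all of $(0,\infty)\times\R^d$ for every $\varepsilon\in(0,\varepsilon_0]$, completing the proof.
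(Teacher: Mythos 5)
Your proposal is correct and follows essentially the same Fife--McLeod strategy as the paper's proof: verify $\mathcal{N}[u^\pm]\gtrless 0$ by splitting into the two far-field regions (where the eigenvalue problems \eqref{eigenvalue0}--\eqref{eigenvalue1} and the smallness of $\partial_u f(x,U)-\partial_u f(x,0\hbox{ or }1)$ give the good term $(\lambda^\pm-\mu)\varepsilon e^{-\mu t}\psi^\pm$ with $\mu\le\min\{\lambda^+,\lambda^-\}/2$) and the interfacial region (where, by periodicity and monotonicity, $\partial_t U$ is bounded below by a positive constant $\beta$, so the term $\varepsilon K\mu e^{-\mu t}\partial_t U$ with $K$ of the sign of $c^*$ and $|K|$ large absorbs the bounded $\rho'$, $\rho''$ and nonlinearity errors), with the same non-circular ordering of the choices of $\mu$, $K$, $\varepsilon_0$ and the same observation that periodicity makes the constants independent of $k\in\Z^d$.
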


We show this lemma by using a Fife-McLeod \cite{fm1} type sub- and super-solutions method. Similar arguments were used in the study of bistable equations/systems in various heterogeneous media (see e.g., \cite{abc,dhz1,dl,xin2,zz21}). In particular, in the spatially periodic case, if $f(x,u)$ is independent of $x$ in a neighborhood of $u=0$ and $u=1$, then the proof follows directly from that of \cite[Lemma 2.2]{dg}. Below, with some modifications, we include the details in the general case.

\begin{proof}
 We only give the construction of the sub-solution, as the analysis for the super-solution is analogous.  Without loss of generality, we assume that $c^*>0$ (we will sketch below that the case where  $c^*<0$ can be treated similarly). Then by Theorem \ref{th:tw} (ii), $U$ is increasing in its first variable.

Let us first set some notations. Since the function $(x,u) \mapsto \partial_uf(x,u)$ is continuous, there exists a small constant $\delta_0\in (0,1/4)$ such that
\begin{equation}\label{supf}
\left\{
\begin{array}{l}
\displaystyle \sup\limits_{x\in\R^d,u\in(-2\delta_0,2\delta_0)}\,\,\,\,\,\,\,\,|\partial _u f(x,0)-\partial _u f(x,u)|\leq \frac {\lambda^-}{2},\vspace{5pt}\\
\displaystyle \sup\limits_{x\in\R^d,u\in(1-2\delta_0,1+2\delta_0)}|\partial _u f(x,1)-\partial _u f(x,u)|\leq \frac {\lambda^+}{2}.
\end{array}\right.
\end {equation}
Let $k\in\Z^d$ be arbitrary. By the definition of pulsating waves, there exists $M\geq 3+|kL|$ sufficiently large such that
\begin{equation*}
\left\{
\begin{array}{ll}
\displaystyle 0< U(t,x)<\delta_0 & \hbox{for all }\,  \, x\cdot e-c^*t\geq M, \vspace{5pt}\\
\displaystyle 1-\delta_0< U(t,x)<1 & \hbox{for all }\,  \,  x\cdot e-c^*t\leq -M,  \vspace{5pt}\\
\displaystyle \frac{\delta_0}{2}< U(t,x)<1-\frac{\delta_0}{2} & \hbox{for all }\,  \, -M<x\cdot e-c^*t< M.
\end{array}\right.
\end {equation*}
Set $\mu=\min\{\lambda^-/2,\lambda^+/2\}$, and
$$K=\frac{B_1+B_2+\mu \|\psi^++\psi^-\|}{\beta \mu},$$
where $B_1$, $B_2$ and $\beta$ are positive constants given by
$$\beta=\min_{\delta_0/2\leq U(t,x)\leq1-\delta_0/2} \partial_t U(t,x),$$
$$B_1=(|c^*|+\|eAe\|+\|\nabla \cdot(Ae)\|)\|\psi^+-\psi^-\|+2\|eA \nabla(\psi^+-\psi^-)\|+\|\nabla\cdot(A\nabla\psi^+)\|+\|\nabla\cdot(A\nabla\psi^-)\|,$$
and
$$B_2=\max_{x\in\R^d,u\in[-\delta_0,1+\delta_0]}|\partial _u f(x,u)|\,\|\psi^++\psi^-\|$$
(by standard elliptic estimates applied to \eqref{eigenvalue1} and \eqref{eigenvalue0}, it is easily seen that $B_1$ and $B_2$ are bounded constants).
Choose a small constant $\varepsilon_0\in (0,\delta_0)$ such that
\begin {equation}\label{varepsilon_0}
0<\varepsilon_0\leq \min\left\{ \frac{\delta_0}{\|\psi^++\psi^-\|},\, \frac{1}{c^*K}  \right\}.
\end {equation}

Next, define
$$\mathcal{L}u^-:=\partial_tu^--\nabla \cdot (A(x)\nabla u^-) -f(x,u^-)\,\,\hbox{ for }\,\,(t,x)\in (0,\infty)\times\R^d.$$
For the above $\mu>0,$ $K>0$ and $\varepsilon_0>0,$ we will show that for any $0<\varepsilon\leq \varepsilon_0,$ $\mathcal{L}u^-\leq0 $ in $(0,\infty)\times\R^d$. Let  $\varepsilon \in (0, \varepsilon_0)$ be arbitrary, and define
\begin{equation*}
q(t)=\varepsilon e^{-\mu t} \,\,\,\,\hbox{and}\,\,\,\, \eta (t)=\varepsilon K(1-e^{-\mu t})\,\,\,\, \hbox{for} \,\,\,\, t>0.
\end{equation*}
Since  $U(t,x)$ is an entire solution of \eqref{eq:main} and thanks to the spatial periodicity, a straightforward computation gives that, for all $(t,x)\in(0,\infty)\times\R^d$,
\begin{equation}\label{cal-lu-}
\begin{split}
\mathcal{L}u^-=-\eta'\partial_tU-q'(\rho \psi^++(1-\rho)\psi^-)+c^*q\rho'(\psi^+-\psi^-)-(f(x,u^-)-f(x,U))\vspace{5pt}\\
+ q[2\rho'eA(\nabla(\psi^+-\psi^-))+(\rho''eAe+\rho'\nabla\cdot(Ae))(\psi^+-\psi^-)\vspace{5pt}\\
+ \rho\nabla \cdot (A\nabla\psi^+ )+(1-\rho)\nabla \cdot (A\nabla\psi^- )]
,\end{split}
\end{equation}
where $\rho$, $\rho'$ and $\rho''$ are evaluated at $x\cdot e-c^*t$, $q$, $q'$, $\eta$ and $\eta'$ are evaluated at $t$, and $\psi^\pm$, $A$ are evaluated at $x$. Below, we complete the proof by considering three cases.

Case (a): $x\cdot e+kL\cdot e-c^*(t-\eta(t))\leq-M$.

By \eqref{varepsilon_0}, we have $0<\varepsilon c^*K\leq1$, whence $0\leq c^*\eta(t)\leq1$ and $x\cdot e-c^*t\leq-M+|kL|<0$. Then, by \eqref{rho}, we have $\rho=1$, $\rho'=\rho''=0$.
It follows from \eqref{cal-lu-} that
$$\mathcal{L}u^-=-\eta'\partial_tU-q'\psi^++q\psi^+\partial_u f(x,U-\theta_1q\psi^+)+q\nabla \cdot (A\nabla\psi^+ )$$
for some $\theta_1=\theta_1(t,x)\in[0,1]$. Remember that $U$ is increasing in its first variable. This together with $\eta'(t)>0$ and the fact that $\lambda^+$ is the principal eigenvalue of \eqref{eigenvalue1} implies that
$$ \mathcal{L}u^-\leq-q'\psi^+-\lambda^+q\psi^+-q\psi^+[\partial_uf(x,1)-\partial_u f(x,U-\theta_1q\psi^+)]. $$
Since $U-\theta_1q\psi^+\in (1-2\delta_0,1)$, by the second inequality of \eqref{supf}, it follows that
$$ \mathcal{L}u^-\leq-q'\psi^+-\lambda^+q\psi^++\frac{\lambda^+}{2}q\psi^+.$$
Since $q(t)=\varepsilon e^{-\mu t}$ with  $0<\mu\leq \lambda^+/2$, it is then easily checked that $\mathcal{L}u^-\leq0$.

Case (b): $x\cdot e+kL\cdot e-c^*(t-\eta(t))\geq M$.

In this case, we have $0\leq c^*\eta(t)\leq1$, $x\cdot e-c^*t\geq M-c^*\eta(t)-|kL|>2$, and hence, $\rho=\rho'=\rho''=0$. It follows that
\begin{equation*}
\left.\begin{array}{ll}
 \mathcal{L}u^-\!\!\!&=-\eta'\partial_tU-q'\psi^-+q\psi^-\partial_u f(x,U-\theta_2q\psi^-)+q\nabla \cdot (A\nabla\psi^-) \vspace{5pt}\\
 & \leq-q'\psi^--\lambda^-q\psi^--q\psi^-[\partial_uf(x,0)-\partial_u f(x,U-\theta_2q\psi^-)]
\end{array}\right.
\end{equation*}
for some $\theta_2=\theta_2(t,x)\in[0,1]$, where $\lambda^-$ is the principal eigenvalue of \eqref{eigenvalue0}.
Now, proceeding similarly as in Case (a), by using this time the first inequality of \eqref{supf} and the fact that $0<\mu\leq\lambda^-/2$, we obtain that $\mathcal{L}u^-\leq0$ when $x\cdot e+kL\cdot e-c^*(t-\eta(t))\geq M$.

Case (c): $-M<x\cdot e+kL\cdot e-c^*(t-\eta(t))<M$.

In this case, we have $U(t-\eta(t),x+ kL)\in [\delta_0/2,1-\delta_0/2]$. This together with \eqref{varepsilon_0} implies that $u^-(t,x)\in [-\delta_0/2,1-\delta_0/2]$. Remember that $\rho\in C^2(\R,[0,1])$ satisfies \eqref{rho}. By the above choice of $B_1$, $B_2$,
it is straightforward to check that
\begin{equation*}
\left.\begin{array}{ll}
c^*q\rho'(\psi^+-\psi^-)+q[2\rho'eA(\nabla(\psi^+-\psi^-)\!\!\!\!&+(\rho''eAe  +\rho'\nabla\cdot(Ae))(\psi^+-\psi^-) \vspace{5pt}\\
& +\rho\nabla \cdot (A\nabla\psi^+ )+(1-\rho)\nabla \cdot (A\nabla\psi^- )]\leq B_1q,
\end{array}\right.
\end{equation*}
and
$$f(x,U)-f(x,u^-)=q(\rho \psi^++(1-\rho)\psi^-)\partial_u f(x,U-\theta_3q(\rho \psi^++(1-\rho)\psi^-))\leq B_2q,$$
for some $\theta_3=\theta_3(t,x)\in[0,1]$. Furthermore, with the choice of $\beta$ and $K$, we compute that
\begin{equation*}
\left.\begin{array}{ll}
\mathcal{L}u^- \!\!\!&\leq-\beta\eta'-q'\|\psi^++\psi^-\|+B_1q+B_2q \vspace{5pt}\\
&=-\mu K\beta q+\mu\|\psi^++\psi^-\|q+B_1q+B_2q=0.
\end{array}\right.
\end{equation*}

Combining the above, we obtain that $\mathcal{L}u^-\leq0 $ in $(0,\infty)\times\R^d$. Thus, $u^-$ is a sub-solution of \eqref{eq:main}. Finally, in the case where $c^*<0$, notice that $U$ is decreasing in its first variable (see Theorem \ref{th:tw} (ii)). It is easily checked that the above arguments remain valid with
$\beta$ and $K$ by replaced by $\min_{\delta_0/2\leq U(t,x)\leq1-\delta_0/2} \{-\partial_t U(t,x)\}$ and $-K$, respectively. As a consequence, there exists $K\in\R$ which has the sign of $c^*$ such that $u^-(t,x)$ is a sub-solution of \eqref{eq:main}. The proof of Lemma \ref{sub-super-nonzero} is thus complete.
\end{proof}

\begin{rmk}\label{speed-unuqe}
We point out that Lemma \ref{sub-super-nonzero} immediately implies the uniqueness of bistable wave speeds. More precisely, if $U_1$ and $U_2$ are two pulsating waves of \eqref{eq:main} connecting $0$ and $1$ in the direction $e$ with speeds $c_1^*$ and $c_2^*$, respectively, then by similar arguments to those used in the proof of \cite[Lemma 1.2]{dg}, one can conclude that $c^*_1=c^*_2$.
\end{rmk}

Next, we turn to show the exponential decay of the pulsating wave $U$ when it approaches the limiting states $0$ and $1$. For clarity, we divide the proof into two parts, according to whether $U$ is stationary or not.

Let us first set some notations. For any $\mu\in\R$, let $\lambda_1^+(\mu)$ and $\lambda_1^-(\mu)$ be, respectively, the principal eigenvalues of
\begin{equation*}
\left\{
\begin{array}{c}
-\nabla \cdot(A\nabla\psi)-2\mu(eA\nabla\psi)-\left(\mu^2eAe+\mu\nabla\cdot(Ae)+\partial_uf(x,1)\right)\psi =\lambda \psi \, \hbox{ in } \,\R^d ,\vspace{5pt}\\
\psi \  \hbox{is $L$-periodic},\quad \psi>0 \, \hbox{ in } \,\R^d,
\end{array}\right.
\end{equation*}
and
\begin{equation}\label{p-lambda-}
\left\{
\begin{array}{c}
-\nabla \cdot(A\nabla\psi)-2\mu(eA\nabla\psi)-\left(\mu^2eAe+\mu\nabla\cdot(Ae)+\partial_uf(x,0)\right)\psi =\lambda \psi \, \hbox{ in } \,\R^d ,\vspace{5pt}\\
\psi \  \hbox{is $L$-periodic},\quad \psi>0 \, \hbox{ in } \,\R^d.
\end{array}\right.
\end{equation}
It is clear that $\lambda_1^{\pm}(0)=\lambda^{\pm}$, where $\lambda^{\pm}$ are the principal eigenvalues of \eqref{eigenvalue1}-\eqref{eigenvalue0}, and hence, $\lambda_1^{\pm}(0)>0$. On the other hand,
since there exist constants $\beta_1>0$ (depending on $A$)  and $\beta_2>0$ (depending on $A$ and $f$) such that
$$\min\left\{\mu^2eAe+\mu\nabla\cdot(Ae)+\partial_uf(x,0),\,\,\mu^2eAe+\mu\nabla\cdot(Ae)+\partial_uf(x,1)\right\}\geq \beta_1\mu^2-\beta_2$$
for all $x\in\R^d$, $\mu\in\R$, it follows that $\lambda_1^\pm(\mu)\leq-\beta_1\mu^2+\beta_2$ for all $\mu\in\R$. As a consequence, $\lambda_1^\pm(\mu)\to-\infty$ as $|\mu|\to \infty$. Combining the above with the fact that
the functions $\mu\mapsto \lambda_1^\pm(\mu)$ are continuous and concave (see e.g., \cite[Proposition 5.7]{bh1}),
we can conclude that the equations
$$\lambda_1^\pm(\mu)=c^*\mu$$
have only two roots, and one is positive and the other one is negative,
where $c^*\in\R$ is the (unique) pulsating wave speed of \eqref{eq:main}. In the sequel, denote by $\underline{\mu}:=\underline{\mu}(c^*)$ the negative roof of $\lambda_1^-(\mu)=c^*\mu$ and by
$\bar{\mu}:=\bar{\mu}(c^*)$ the positive root of  $\lambda_1^+(\mu)=c^*\mu$.

The exponential decay of non-stationary pulsating waves is stated as follows.

\begin{lem}\label{exp-decay-hd-1}
Assume that the pulsating wave $U$ is non-stationary, that is, $U(t,x)=\Phi(x\cdot e-c^*t,x)$ with $c^*\neq 0$. Then,
\begin{equation}\label{exponen-j-hd}
\frac{\partial_{\xi}\Phi(\xi,x)}{\Phi(\xi,x)}\to \underline{\mu} \,\,\hbox{ as } \,\,\xi\to + \infty,\quad
\frac{\partial_{\xi}\Phi(\xi,x)}{1-\Phi(\xi,x)}\to -\bar{\mu} \,\,\hbox{ as } \,\,\xi\to - \infty,
\end{equation}
where all the convergences  hold uniformly in $x\in\R^d$. Moreover, for any small $\varepsilon>0$, there exist constants $C_+\geq C_->0$ such that
\begin{equation}\label{non-decay+}
C_- e^{(\underline{\mu}-\varepsilon)\xi} \leq   \Phi(\xi,x) \leq  C_+ e^{(\underline{\mu}+\varepsilon)\xi} \,\,\hbox{ for all }\,\,\xi\geq 0,\,\,x\in\R^d,
\end{equation}
and
\begin{equation}\label{non-decay-}
C_- e^{(\bar{\mu}+\varepsilon)\xi} \leq  1- \Phi(\xi,x) \leq  C_+ e^{(\bar{\mu}-\varepsilon)\xi} \,\,\hbox{ for all }\,\,\xi\leq 0,\,\,x\in\R^d.
\end{equation}
\end{lem}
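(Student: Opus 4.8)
The plan is to exploit the linearization of the profile equation about the stable states $0$ and $1$, together with the eigenvalue characterization of the admissible exponential rates. Writing $U(t,x)=\Phi(\xi,x)$ with $\xi=x\cdot e-c^*t$, the profile $\Phi$ solves the degenerate elliptic equation
\begin{equation*}
\nabla\cdot(A\nabla\Phi)+2eA\nabla\Phi_\xi+(eAe)\Phi_{\xi\xi}+\big(\nabla\cdot(Ae)+c^*\big)\Phi_\xi+f(x,\Phi)=0
\end{equation*}
on $\R\times\R^d$, $L$-periodic in $x$. Near $\xi=+\infty$ one has $\Phi\to0$, and the ansatz $w=\psi(x)e^{\mu\xi}$ solves the linearization (with $f(x,\Phi)$ replaced by $\partial_uf(x,0)\Phi$) precisely when $\lambda_1^-(\mu)=c^*\mu$ and $\psi$ is the corresponding principal eigenfunction of \eqref{p-lambda-}; a symmetric statement holds near $\xi=-\infty$ for $1-\Phi$ with $\lambda_1^+$. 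By concavity of $\mu\mapsto\lambda_1^-(\mu)$ and $\lambda_1^-(0)=\lambda^->0$, the line $c^*\mu$ meets the graph at exactly the negative root $\underline{\mu}$ and one positive root; since $\Phi\to0$, only the decaying rate $\underline{\mu}$ can appear, and likewise $\bar{\mu}$ governs the tail at $-\infty$.

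For the upper bounds I would take $\mu=\underline{\mu}+\varepsilon\in(\underline{\mu},0)$, so that $c^*\mu-\lambda_1^-(\mu)<0$, and check that $\bar w:=C\psi^-_\mu e^{\mu\xi}$ is a strict supersolution of the linearized profile operator. Since $f(x,\Phi)-\partial_uf(x,0)\Phi=O(\Phi^2)=O(e^{2\mu\xi})$ and $2\mu<\mu<0$, the nonlinear remainder is dominated by the negative leading term for $\xi\ge\xi_0$ large, so $\bar w$ is a genuine supersolution on $\{\xi\ge\xi_0\}$. Choosing $C$ large enough that $\bar w\ge\Phi$ on $\{\xi=\xi_0\}$ (both tend to $0$ at $+\infty$) and comparing yields $\Phi\le C_+e^{(\underline{\mu}+\varepsilon)\xi}$, the right half of \eqref{non-decay+}; the analogous construction around $1$ with $\mu=\bar{\mu}-\varepsilon$ gives the right half of \eqref{non-decay-}. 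For the lower bounds I would run the mirror argument with $\mu=\underline{\mu}-\varepsilon<\underline{\mu}$ (resp. $\mu=\bar{\mu}+\varepsilon$), where now $c^*\mu-\lambda_1^-(\mu)>0$ produces a strict subsolution $\underline w:=c\psi^-_\mu e^{\mu\xi}$, and compare from below after fixing $c$ small using the strict positivity of $\Phi$ on the slab $\{\xi=\xi_0\}$.

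The delicate point in both comparisons is that the zeroth-order coefficient $\partial_uf(x,0)$ of the linearized operator is sign-indefinite, so the bare maximum principle fails on the unbounded region $\{\xi\ge\xi_0\}$. I would resolve this by dividing the relevant difference by a suitable exponential eigenfunction weight $\psi^-_{\mu'}e^{\mu'\xi}$ with $\mu'\in(\underline{\mu},0)$ chosen so that $c^*\mu'-\lambda_1^-(\mu')<0$: the new unknown then solves a divergence-form inequality whose zeroth-order term is strictly negative for $\xi_0$ large (the intermediate state being close to $0$ there), after which the ordering at $\xi_0$ together with the decay at $+\infty$ forces the sign by the maximum principle. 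Equivalently, one may transport the exponential sub- and supersolutions to space-time sub- and supersolutions of \eqref{eq:main} and invoke the parabolic comparison already used for Lemma \ref{sub-super-nonzero}, with the positivity $\lambda^->0$ supplying the needed dissipativity. This lower-bound comparison, rather than the algebra, is the main obstacle.

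Finally, to upgrade the two-sided bounds to the sharp logarithmic-derivative limits \eqref{exponen-j-hd}, I would argue by compactness and classification. Given $\xi_n\to+\infty$, the normalized shifts $\Phi_n(\xi,x):=\Phi(\xi+\xi_n,x)/\|\Phi(\xi_n,\cdot)\|_{L^\infty}$ are, by the bounds just established, locally bounded above and below by positive constants; standard parabolic/elliptic estimates give $C^1_{\mathrm{loc}}$ convergence, along a subsequence, to a positive entire solution $\Phi_\infty$ of the linearized periodic operator obeying the decay rate $\underline{\mu}$ forced by \eqref{non-decay+}. A Liouville-type uniqueness result for positive solutions of this periodic operator—ruling out any admixture of the positive-root exponential and using simplicity of the principal eigenvalue—identifies $\Phi_\infty(\xi,x)=\psi^-_{\underline{\mu}}(x)e^{\underline{\mu}\xi}$ up to a positive constant, whence $\partial_\xi\Phi_n/\Phi_n\to\underline{\mu}$ uniformly in $x$; since the limit is independent of the sequence, $\partial_\xi\Phi/\Phi\to\underline{\mu}$, and the same scheme at $-\infty$ yields $\partial_\xi\Phi/(1-\Phi)\to-\bar{\mu}$. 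The bounds \eqref{non-decay+}--\eqref{non-decay-} may then alternatively be recovered by integrating these limits, and I expect the classification of $\Phi_\infty$ to be the other substantive step.
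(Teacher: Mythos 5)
Your proposal inverts the paper's order: the paper proves the sharp limits \eqref{exponen-j-hd} \emph{first} (by compactness and the strong maximum principle) and then obtains \eqref{non-decay+}--\eqref{non-decay-} by integrating them, whereas you prove the bounds first and try to upgrade. The bounds half of your plan is essentially viable --- it is the same barrier scheme the paper uses for the stationary case in Lemma \ref{exp-decay-hd-2} --- except that your control of the nonlinear remainder, $f(x,\Phi)-\partial_uf(x,0)\Phi=O(\Phi^2)=O(e^{2\mu\xi})$, is circular (it presupposes the decay being proved). The standard repair is the linear bound $|f(x,u)-\partial_uf(x,0)u|\le\varepsilon' u$ for $0\le u\le\delta_0$, with $\varepsilon'$ absorbed into the spectral gap $\lambda_1^-(\mu)-c^*\mu$, and the comparison must be run in the parabolic variables $(t,x)$ (the profile equation is only degenerate elliptic), e.g.\ via the sliding argument with the non-decaying eigenfunction $\psi^-$ used in the proof of Lemma \ref{exp-decay-hd-2}.

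The genuine gap is in your final step. First, the normalized shifts $\Phi_n=\Phi(\cdot+\xi_n,\cdot)/\|\Phi(\xi_n,\cdot)\|_{L^\infty}$ are \emph{not} ``locally bounded above and below by positive constants'' as a consequence of \eqref{non-decay+}: those bounds only give $\Phi_n(\xi,x)\le (C_+/C_-)\,e^{(\underline{\mu}+\varepsilon)\xi}\,e^{2\varepsilon\xi_n}$, and the factor $e^{2\varepsilon\xi_n}$ blows up; compactness must instead come from Harnack/gradient estimates $|\partial_tU|+|\nabla U|\le CU$ (valid because $U>0$ solves a linear parabolic equation with bounded coefficients), which require no decay bounds at all. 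Second, and more seriously, the limit $\Phi_\infty$ does not inherit ``the decay rate $\underline{\mu}$'' from \eqref{non-decay+}: two-sided exponential bounds never force limits of normalized translates to be pure exponentials --- for instance $g(\xi)=e^{\underline{\mu}\xi}(2+\sin(\delta\xi))$ satisfies \eqref{non-decay+} with $\varepsilon$-independent constants, yet its normalized translates converge to non-exponential limits. So the hypothesis your ``Liouville-type uniqueness result'' would need is unavailable; moreover, that classification is itself the substantive content of the lemma and cannot follow from ``simplicity of the principal eigenvalue'' alone, since positive superpositions $a\,e^{\underline{\mu}\xi}\psi_1(x)+b\,e^{\mu^+\xi}\psi_2(x)$ (with $\mu^+>0$ the positive root of $\lambda_1^-(\mu)=c^*\mu$ and $\psi_1,\psi_2$ the corresponding principal eigenfunctions) are perfectly good positive entire solutions. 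The paper closes exactly this step elementarily: choosing $(\xi_n,x_n)$ to realize $\mu_*=\limsup_{\xi\to+\infty}\sup_{x}\partial_\xi\Phi/\Phi$ (finite by the gradient estimate, $\le 0$ by monotonicity), the limit $V_\infty$ of $U(\cdot+t_n,\cdot)/U(t_n,x_n)$ satisfies $W:=\partial_tV_\infty/V_\infty\ge-c^*\mu_*$ everywhere with equality at an interior point; since $W$ solves a parabolic equation with no zeroth-order term, the strong maximum principle yields $W\equiv-c^*\mu_*$, hence $V_\infty=e^{\mu_*(x\cdot e-c^*t)}\psi(x)$, and the eigenvalue identity $\lambda_1^-(\mu_*)=c^*\mu_*$ together with $\mu_*\le 0$ pins $\mu_*=\underline{\mu}$. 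You would need to import this saturation-plus-maximum-principle argument (or invoke genuine Martin-boundary theory for periodic operators) to complete your scheme; as written, the proof of \eqref{exponen-j-hd} is incomplete.
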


\begin{proof}
The proof is similar to that of \cite[Lemma 2.3]{dg} with some minor modifications; therefore we only give the outline.

Let us first show the fist convergence stated in \eqref{exponen-j-hd}. Since $c^*\neq 0$ and since $\partial_\xi \Phi (\xi,x)=- \partial_t U ((x\cdot e-\xi)/c^*,x)/c^*$, standard parabolic estimates applied to equation \eqref{eq:main} imply that $\partial_\xi \Phi /\Phi$ is globally bounded in $\R\times\R^d$.
This together with the fact that $\partial_\xi \Phi (\xi,x) < 0$ for $(\xi,x)\in\R\times\R^d$ yields
$$\mu_*:=\limsup_{\xi\to + \infty}\sup_{x\in\R^d}\frac{\partial_\xi \Phi (\xi,x)}{\Phi(\xi,x)} \in (-\infty,0].$$
Since $\Phi(\xi,x)$ is $L$-periodic in $x$, there exists a sequence $(\xi_n,x_n)\in \R\times[0,L_1]\times\cdots\times[0,L_d]$ such that $\xi_n\to + \infty$ as $n\to +\infty$ and
$$\frac{\partial_\xi \Phi (\xi_n,x_n) }{\Phi(\xi_n,x_n) }\to \mu_* \,\hbox{ as } \, n\to +\infty.  $$
Up to extraction of some subsequence, we may assume that $x_n\to x_*$ as $n\to\infty$ for some
$x_*\in [0,L_1]\times\cdots [0,L_d]$.
For each $n\in\N$, set
$$t_n=\frac{x_n\cdot e-\xi_n}{c^*} \quad\hbox{and}\quad V_n(t,x)=\frac{U(t+t_n,x)}{U(t_n,x_n)}\, \, \hbox{ for }\, (t,x)\in\R\times\R^d.$$
By standard parabolic estimates, up to extraction of some subsequence, $V_n \to V_{\infty}$ in $C^{1,2}_{loc}(\R\times\R^d)$, where $V_{\infty}(t,x)$ is a positive solution of
\begin{equation}\label{eq-Vinfty-hd}
\partial_t V_{\infty}(t,x)=\nabla \cdot(A(x)\nabla  V_{\infty}(t,x)) +\partial_uf(x,0)V_{\infty}(t,x) \, \ \hbox{ for }  \,(t,x)\in\R\times\R^d.
\end{equation}

Furthermore, by the definition of $\mu_*$, one can check that for $(t,x)\in\R\times\R^d$,
$$W(t,x):=\frac{\partial_t V_{\infty}(t,x)}{V_{\infty}(t,x)}\geq -c^*\mu_*\geq 0 \,\,(\hbox{{\it resp. }}\leq -c^*\mu_*\leq 0)\,\,\hbox{ if }\,\, c^*>0\,\, (\hbox{{\it resp.}}\,\,\hbox{ if } \,\,c^*<0),$$
and  $W(t,x)$ obeys 
$$\partial_t W(t,x)=\nabla \cdot(A(x)\nabla W(t,x))+ 2\frac{\nabla V_{\infty}(t,x)}{V_{\infty}(t,x)}  A(x)\nabla W(t,x) \, \ \hbox{ for }  \,(t,x)\in\R\times\R^d.$$
Since $W(0,x_*)=-c^*\mu_*$, applying the strong maximum principle to the equation satisfied by $W(t,x)$, one obtains
 $W \equiv -c^*\mu_*$ in $\R\times\R^d$. This together with \eqref{eq-Vinfty-hd} implies that
$$V_{\infty}(t,x)=e^{\mu_* (x\cdot e-c^*t)}\psi(x)\, \hbox{ for } \,(t,x)\in\R\times\R^d, $$
where $\psi(x)$ is a positive $L$-periodic solution of
$$ -\nabla \cdot(A\nabla\psi)-2\mu_*(eA\nabla\psi)-\left(\mu_*^2eAe+\mu_*\nabla \cdot(Ae)+\partial_uf(x,0)\right)\psi=\mu_* c^*\psi,\,\,\, x\in\R^d.$$
In other words, $\mu_* c^*$ is the principal eigenvalue of \eqref{p-lambda-} with $\mu=\mu_*$. By the uniqueness of  principal eigenvalues,  we have $\lambda_1^-(\mu)=\mu c^*$. Since $\mu_*\leq 0$, there must hold $\mu_*=\underline{\mu}$, that is, $\mu_*$ is the negative root of $\lambda_1^-(\mu)=c^*\mu$. Now, we reach that $\limsup_{\xi\to +\infty}\sup_{x\in\R^d} \partial_\xi \Phi/\Phi=\underline{\mu}$.
In a similar way, one can prove that $\liminf_{\xi\to +\infty}\inf_{x\in\R^d}\partial_\xi \Phi /\Phi=\underline{\mu}$.
This ends the proof of the first convergence of \eqref{exponen-j-hd}.

Next, writing $V(t,x)=1-U(t,x)$ for $(t,x)\in\R\times\R^d$ and $\Psi(\xi,x)=1-\Phi(\xi,x)$ for $(\xi,x)\in\R\times\R^d$,
one checks that $V(t,x)=\Psi(x\cdot e-c^*t,x)$ is a pulsating wave of the following equation
\begin{equation*}
\partial_t v  = \nabla \cdot (A\nabla v) + g(x,v), \quad t \in \R \, , \ x \in \R^d,
\end{equation*}
with the limit conditions $\Psi(\xi,x)\to 1$ as $\xi \to +\infty$ and $\Psi(\xi,x)\to 0$ as $\xi \to -\infty$ uniformly in $x\in\R^d$, where $g$ is a function given by $g(t,v)=-f(x,1-v)$. It is clear that $\partial_vg(x,0)\equiv \partial_u f(x,1)$. Notice that $\Psi(\xi,x)$ is increasing in $\xi\in\R$, and hence, $\partial_\xi \Psi / \Psi \geq  0$ in $\R\times\R^d$.
Then, proceeding similarly as above, one can conclude that
$$ \frac{\partial_\xi \Psi (\xi,x)}{\Psi(\xi,x)} \to \bar{\mu} \,\,\hbox{ as }\,\, \xi\to-\infty\,\,\hbox{ uniformly in } \,x\in\R^d.$$
This immediately implies the second part of \eqref{exponen-j-hd}.

Finally, once \eqref{exponen-j-hd} is obtained, the estimates \eqref{non-decay+} and \eqref{non-decay-} follow directly from the discussion in \cite[Remark 2.1]{dg}.  The proof of Lemma \ref{exp-decay-hd-1} is thus complete.
\end{proof}

The following lemma provides the exponential decay of stationary pulsating waves.

\begin{lem}\label{exp-decay-hd-2}
Assume that the pulsating wave $U=U(x)$ is stationary, that is, $c^*=0$.  Then,  for any small 
$\varepsilon>0$, there exist constants $C_+\geq C_->0$ such that
\begin{equation}\label{s-decay+}
C_- e^{(\underline{\mu}-\varepsilon)x\cdot e} \leq   U(x) \leq  C_+ e^{(\underline{\mu}+\varepsilon)x\cdot e} \,\,\hbox{ for all }\,\,x\cdot e \geq 0,
\end{equation}
and
\begin{equation*}
C_- e^{(\bar{\mu}+\varepsilon)x\cdot e} \leq 1- U(x)  \leq  C_+ e^{(\bar{\mu}-\varepsilon)x\cdot e} \,\,\hbox{ for all }\,\,x\cdot e \leq 0.
\end{equation*}
\end{lem}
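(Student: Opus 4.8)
The plan is to reduce the whole statement to the behaviour of $U$ near the state $0$ as $x\cdot e\to+\infty$, and then to extract the decay rate from barriers built out of the principal eigenfunctions of \eqref{p-lambda-}. The estimate near $1$ as $x\cdot e\to-\infty$ follows from the same argument applied to $V:=1-U$, which solves the analogous elliptic equation $\nabla\cdot(A\nabla V)+g(x,V)=0$ with $g(x,v)=-f(x,1-v)$ and $\partial_v g(x,0)=\partial_u f(x,1)$, so that the relevant exponent is governed by $\lambda_1^+$ and its positive root $\bar{\mu}$. First I would record that $0<U<1$ in $\R^d$ by the strong maximum principle, and that $U(x)\to 0$ uniformly (in the transverse variables) as $x\cdot e\to+\infty$. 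Using that $\partial_u f(x,u)$ is continuous at $u=0$ uniformly in $x$, I fix a modulus $\sigma(\delta)\to 0^+$ with $|f(x,s)-\partial_u f(x,0)s|\le\sigma(\delta)\,s$ for $0\le s\le\delta$ and all $x$, and then pick $R>0$ so that $0<U(x)\le\delta$ on the half-space $\Omega_R:=\{x\cdot e>R\}$, with $\delta$ as small as needed.

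Next I fix small $\varepsilon>0$ and set $\mu_\pm:=\underline{\mu}\pm\varepsilon$. Since $\mu\mapsto\lambda_1^-(\mu)$ is concave with $\lambda_1^-(0)=\lambda^->0$ and $\lambda_1^-(\underline{\mu})=0$ at its \emph{negative} root, we have $\lambda_1^-(\mu_+)>0>\lambda_1^-(\mu_-)$ for $\varepsilon$ small. Denoting by $\varphi_\pm>0$ the $L$-periodic principal eigenfunctions of \eqref{p-lambda-} at $\mu=\mu_\pm$, a direct computation shows that $w_\pm(x):=e^{\mu_\pm x\cdot e}\varphi_\pm(x)$ satisfy
$$\nabla\cdot(A\nabla w_\pm)+\partial_u f(x,0)\,w_\pm=-\lambda_1^-(\mu_\pm)\,w_\pm \quad\text{in } \R^d.$$
Shrinking $\delta$ (enlarging $R$) I arrange $\sigma(\delta)<\min\{\lambda_1^-(\mu_+),\,-\lambda_1^-(\mu_-)\}$. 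On $\Omega_R$ the bound $0\le U\le\delta$ then gives the two linear differential inequalities $\nabla\cdot(A\nabla U)+(\partial_u f(x,0)+\sigma(\delta))U\ge 0$ and $\nabla\cdot(A\nabla U)+(\partial_u f(x,0)-\sigma(\delta))U\le 0$; that is, $U$ is a subsolution of $\mathcal L^+v:=\nabla\cdot(A\nabla v)+(\partial_u f(x,0)+\sigma(\delta))v$ and a supersolution of $\mathcal L^-v:=\nabla\cdot(A\nabla v)+(\partial_u f(x,0)-\sigma(\delta))v$. Meanwhile $\mathcal L^+w_+=(\sigma(\delta)-\lambda_1^-(\mu_+))w_+<0$ and $\mathcal L^-w_-=(-\sigma(\delta)-\lambda_1^-(\mu_-))w_->0$, so $w_+$ is a positive strict supersolution of $\mathcal L^+$ and $w_-$ a positive strict subsolution of $\mathcal L^-$ throughout $\Omega_R$.

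For the upper bound I choose $C_+$ so large that $C_+w_+\ge 1\ge U$ on the hyperplane $\{x\cdot e=R\}$ (possible since $w_+=e^{\mu_+R}\varphi_+\ge e^{\mu_+R}\min\varphi_+>0$ there by periodicity), and compare $U$ with $C_+w_+$ in $\Omega_R$, both tending to $0$ as $x\cdot e\to+\infty$. For the lower bound I take $C_->0$ small so that $C_-w_-\le U$ on $\{x\cdot e=R\}$, where $\inf_{\{x\cdot e=R\}}U>0$ follows from the uniform (periodic-coefficient) Harnack inequality, and compare from below. Since $w_\pm(x)\le\|\varphi_\pm\|\,e^{\mu_\pm x\cdot e}$ while $\varphi_\pm$ is bounded below by a positive constant, the resulting inequalities $C_-w_-\le U\le C_+w_+$ on $\Omega_R$ become, after absorbing the periodic factors into the constants and recalling $\mu_\pm=\underline{\mu}\pm\varepsilon$, precisely \eqref{s-decay+}.

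The only genuinely delicate point is the comparison step in the unbounded half-space $\Omega_R$, where the zeroth-order coefficients of $\mathcal L^\pm$ need not have a sign so the maximum principle is not automatic. I would run it through the positive functions $w_\pm$ themselves: writing $h:=U/w_+$ (resp. $U/w_-$), the identity $\nabla\cdot(w_\pm^2 A\nabla h)=w_\pm(\mathcal L^\pm U-h\,\mathcal L^\pm w_\pm)$ cancels the zeroth-order terms and yields $\nabla\cdot(w_+^2 A\nabla h)\ge 0$ (resp. $\nabla\cdot(w_-^2 A\nabla h)\le 0$), a divergence-form inequality with no zeroth-order term for which the maximum (resp. minimum) principle applies. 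Equivalently, the existence of the \emph{strict} positive supersolution $w_+$ says the generalized principal eigenvalue of $\mathcal L^+$ in $\Omega_R$ is positive, which is exactly the condition guaranteeing the maximum principle for functions bounded above, as in the decay discussion of \cite[Remark 2.1]{dg}. Controlling the ratio $h$ as $x\cdot e\to+\infty$ — ruling out that $U$ decays strictly slower than $w_+$ — is where the strict inequality $\lambda_1^-(\mu_+)>\sigma(\delta)$ is essential, and this is the main obstacle of the proof.
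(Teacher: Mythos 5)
Your construction of the barriers is exactly the paper's: the eigenfunctions $\varphi_\pm$ of \eqref{p-lambda-} at $\mu_\pm=\underline{\mu}\pm\varepsilon$, the resulting exact solutions $w_\pm=e^{\mu_\pm x\cdot e}\varphi_\pm$ of the linearized equation with error $-\lambda_1^-(\mu_\pm)w_\pm$, the uniform linearization $|f(x,s)-\partial_uf(x,0)s|\le\sigma(\delta)s$ with $\sigma(\delta)<\min\{\lambda_1^-(\mu_+),-\lambda_1^-(\mu_-)\}$, and the matching of constants on a slab where $U$ is bounded away from $0$ and $1$. The gap is in the comparison over the unbounded half-space $\Omega_R$, the step you yourself call the main obstacle but do not actually carry out. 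The ratio trick gives $\nabla\cdot(w_+^2A\nabla h)\ge0$ for $h=U/w_+$, but a maximum principle for this inequality on $\Omega_R$ requires a priori control of $h$ near infinity, and none is available: since $w_+\to0$ as $x\cdot e\to+\infty$, the known bound $0<U\le\delta$ only yields $h\le(\delta/\min\varphi_+)\,e^{|\mu_+|x\cdot e}$, so the supremum of $h$ could a priori be infinite and approached along $x\cdot e\to+\infty$, where no boundary data constrain it. The same objection applies to the criterion you invoke (``positive generalized principal eigenvalue implies the maximum principle for functions bounded above''): applied to the bounded-above function $v=U-C_+w_+$, that criterion is only justified if one has a positive supersolution of $\mathcal L^+$ bounded away from zero (Phragm\'en--Lindel\"of), or if one already knows $v=O(w_+)$ at infinity, which is the statement being proved. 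As written, the argument is circular at its decisive point; the strictness $\lambda_1^-(\mu_+)>\sigma(\delta)$ by itself does not control the behaviour of $h$ at infinity.

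The missing ingredient --- and this is exactly how the paper closes this step --- is an additive corrector that does \emph{not} decay: the periodic principal eigenfunction $\psi^-$ of \eqref{eigenvalue0}, i.e.\ of \eqref{p-lambda-} at $\mu=0$. Because $\sigma(\delta)<\lambda_1^-(\mu_\pm)<\lambda_1^-(0)=\lambda^-$, this $\psi^-$ is a strict positive supersolution of your operators $\mathcal L^\pm$ with, crucially, $\inf_{\R^d}\psi^->0$. The paper sets $\gamma^*=\inf\{\gamma\ge0:\,U-u_-+\gamma\psi^-\ge0 \hbox{ on } \{x\cdot e\ge\xi_+\}\}$ (and the analogue with $u_+-U$ for the upper bound), observes that any near-touching sequence must have bounded $x\cdot e$ precisely because $\gamma\psi^-$ dominates the decaying terms $U$ and $u_\pm$ near infinity, handles the transverse unboundedness by translating by periods and extracting a $C^2_{loc}$ limit, and applies the strong maximum principle to that limit, contradicting the strict inequality generated by $\lambda_1^-(0)>\lambda_1^-(\mu_\pm)$ unless $\gamma^*=0$. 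This sliding argument with a non-decaying corrector (together with the periodicity--compactness step, which your proposal also leaves implicit in ``the maximum principle applies'') is what your proof is missing; the rest of your outline, including the reduction to $V=1-U$ near the state $1$ and the Harnack argument for $\inf_{\{x\cdot e=R\}}U>0$, matches the paper.
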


\begin{proof}
We only give the proof of \eqref{s-decay+}, since similar arguments applied to the equation satisfied by $1- U(x)$ give the estimates in the case $x\cdot e \leq 0$.

Remember that the function $\mu\in\R\mapsto \lambda_1^-(\mu)$ is concave and continuous, that $\lambda_1^-(0)>0$, and
that $\underline{\mu}$ is the negative root of $\lambda_1^-(\mu)=0$ (recall that $c^*=0$ in the present lemma). Then, there exists  some small $\varepsilon_1>0$ such that  $\lambda_1^-(\mu)$ is increasing in $[\underline{\mu}-\varepsilon_1,\underline{\mu}+\varepsilon_1]\subset (-\infty,0)$, and that
$$ \lambda_1^-(\underline{\mu}-\varepsilon)<0<  \lambda_1^-(\underline{\mu}+\varepsilon) <  \lambda_1^-(0) \,\,\hbox{ for any }\,\, \varepsilon \in (0, \varepsilon_1].$$
In the arguments below, $\varepsilon \in (0, \varepsilon_1]$ is arbitrary. Denote by
$$\mu_{\varepsilon,\pm}:=\underline{\mu}\pm \varepsilon,$$
and
by $\psi_{\varepsilon,\pm}(x)$ the positive principal eigenfunction of \eqref{p-lambda-} with $\mu$ replaced by $\mu_{\varepsilon,\pm}$, respectively. We normalize $\psi_{\varepsilon,\pm}(x)$ by requiring that $\|\psi_{\varepsilon,\pm}\|=1$.  Moreover,  since the function $u \mapsto \partial_uf(x,u)$ is continuous uniformly in $x\in\R^d$ and $f(x,0)=0$, there exists a small constant $\delta_0\in (0,1/2)$ such that
\begin{equation}\label{derivatives-u}
(\partial_uf(x,0)-\varepsilon')u\leq  f(x,u )\leq (\partial_uf(x,0)+\varepsilon')u\,\,\hbox{ for all }\,\, u\in [0,\delta_0],\,\,x\in\R^d,
\end{equation}
where $\varepsilon'=\min\left\{ \lambda_1^-(\mu_{\varepsilon,+}),\, -\lambda_1^-(\mu_{\varepsilon,-})\right\}$.
By the definition of stationary pulsating waves, there exists $\xi_+ \geq 0$ such that
\begin{equation}\label{find-delta0}
0<U(x) \leq \delta_0 \,\,\hbox{ for all }\,\, x\cdot e \geq \xi_+.
\end{equation}
It is easily seen that
\begin{equation}\label{length-xi+}
0< \inf_{x\cdot e\in [0,\,\xi_++1]} U(x) \leq  \sup_{x\cdot e\in [0,\,\xi_++1]} U(x) <1.
\end{equation}
Now, we define
$$u_{\pm}(x)=K_{\pm} e^{\mu_{\varepsilon,\pm} x\cdot e}  \psi_{\varepsilon,\pm}(x)\,\,\hbox{ for }\,\,x\in\R^d,$$
where $K_-\leq K_+$ are two positive constants satisfying
\begin{equation}\label{com-boundary}
u_-(x) \leq  U(x) \leq  u_+(x)  \,\,\hbox{ for all }\,\, x\in\R^d \,\,\hbox{ such that } x\cdot e \in [0,\xi_++1].
\end{equation}
Indeed, the existence of $K_{\pm}$ follows from \eqref{length-xi+} and the fact that $\psi_{\varepsilon,\pm}(x)$ can be bounded from above and below by positive constants in $\R^d$.  It is straightforward to check that  $u_{\pm}(x)$ satisfy
$$ -\nabla \cdot (A\nabla u_{\pm}) -(\partial_uf(x,0)+ \lambda_1^-(\mu_{\varepsilon,\pm}))u_{\pm}=0  \,\,\hbox{ for }\,\,x\in\R^d. $$

Next, we show that
\begin{equation}\label{u--U-u+}
u_-(x) \leq  U(x) \leq  u_+(x)  \,\,\hbox{ for all }\,\, x\cdot e \geq \xi_+.
\end{equation}
We only give the proof of $\underline{u}(x) \leq  U(x)$ for  $x\cdot e \geq \xi_+$, as we will sketch below that the other inequality can be proved similarly.   To do so, we call
$$\gamma^*:=\inf\{\gamma\geq 0:\, U(x)-u_-(x)+\gamma \psi^-(x) \geq 0  \hbox{ for }   x \cdot e \geq \xi_+ \},$$
where $\psi^-(x)$ is the unique principal eigenfunction of \eqref{eigenvalue0} satisfying the normalization condition \eqref{normalization}.
Since $U(x)$, $u_-(x)$ are bounded in $x\cdot e\geq \xi_+$ and since $\psi^-(x)$ is positive, it is clear that $\gamma^*\geq 0$ is well defined. It is also easily seen that, to obtain the first inequality of \eqref{u--U-u+}, one only needs to show that $\gamma^*= 0$.

Assume by contradiction that $\gamma^*>0$. Define
$$z(x):=U(x)-u_-(x)+\gamma^*\psi^-(x)\,\,\hbox{ for all }\,\, x\in\R^d. $$
Then, $z\geq0$ for all $x\cdot e\geq \xi_+$, and there exists a sequence $(x_n)_{n\in\N}\in \R^d$ such that $x_n\cdot e\geq \xi_+$
for each $n\in\N$, and that
\begin{equation}\label{zxn-infty}
z(x_n)\to 0 \,\,\hbox{ as} \,\, n\to \infty.
\end{equation}
Since $U(x)\to 0$ and $u_-(x)\to 0$  as $x\cdot e \to +\infty$ and since $\psi^-(x)$ is positive in $\R^d$,  the sequence $(x_n\cdot e)_{n\in\N}$  must be bounded. Moreover, it follows from \eqref{com-boundary} and the assumption $\gamma^*>0$ that $x_n\cdot e \geq \xi_++1$ for all $n\in\N$.

For each $n\in\N$, let us write $x_n=x_n'+x_n''$, where
$x_n'' \in{[0,L_1]\times\cdots\times[0,L_d]}$ and $x_n' \in L\Z^d$. It is clear that $(x_n'\cdot e)_{n\in\N}$ is bounded.
Then, up to extraction of some subsequence, one finds some $x_{\infty}\in{[0,L_1]\times\cdots\times[0,L_d]} $ and $\xi_{\infty}\in\R$ such that
$x_n'' \to x_{\infty}$ and $x_n'\cdot e\to \xi_{\infty}$ as $n\to\infty$. It is easily checked that $x_{\infty}\cdot e \geq \xi_++1-\xi_{\infty}$.
Furthermore,  for each $n\in\N$, we define
$$z_n(x):=z(x+x_n')\, \hbox{ for all } \,   x\in\R^d.  $$
Since the functions $x\mapsto A(x)$ and $x\mapsto f(x,u)$ are $L$-periodic, it follows from the first inequality of \eqref{derivatives-u} and \eqref{find-delta0}
that
\begin{equation*}
\left.
\begin{array}{ll}
 -\nabla \cdot (A(x)\nabla \!\!\!&\!\!\! U(x+x_n')) -(\partial_uf(x,0)+\lambda_1^-(\mu_{\varepsilon,-}))U(x+x_n') \vspace{5pt}\\
& =  f(x,U(x+x_n'))-(\partial_uf(x,0)+\lambda_1^-(\mu_{\varepsilon,-}))U(x+x_n') \geq 0
\end{array}\right.
\end{equation*}
for all $x\in\R^d $ such that $x\cdot e \geq \xi_+-x_n\cdot e$. Similarly, by the periodicity, we have
\begin{equation*}
-\nabla \cdot (A(x)\nabla u_-(x+x_n')) -(\partial_uf(x,0)+\lambda_1^-(\mu_{\varepsilon,-}))u_-(x+x_n')=0 \,\,\hbox{ in }\,\,\R^d.
\end{equation*}
Moreover, since $\psi^-(x)$ is the principal eigenfunction associated with $\lambda^-_1(0)$, it follows that
\begin{equation*}
-\nabla \cdot (A(x)\nabla (\gamma^*\psi^-(x)))-(\partial_uf(x,0)+\lambda_1^-(\mu_{\varepsilon,-}))(\gamma^*\psi^-(x))
= \gamma^* (\lambda^-_1(0)-\lambda_1^-(\mu_{\varepsilon,-}))\psi^-(x)
\end{equation*}
in $\R^d$. Combining the above, we obtain that, for each $n\in\N$,
\begin{equation}\label{eq-zn}
-\nabla \cdot (A(x)\nabla z_n(x)) -(\partial_uf(x,0)+\lambda_1^-(\mu_{\varepsilon,-}))z_n(x)\geq \gamma^* (\lambda^-_1(0)-\lambda_1^-(\mu_{\varepsilon,-}))\psi^-(x)    \end{equation}
for all $x\in\R^d$ such that $x\cdot e \geq \xi_+-x_n\cdot e$.
By standard elliptic estimates, there exists $z_{\infty}\in C^2(\R^d)$ such that,  up to extraction of some subsequence, $z_n\to z_{\infty}$ as $n\to \infty$ in $C^2_{loc}(\R^d)$. It is clear that $z_{\infty}(x)\geq 0$ for $x\cdot e \geq \xi_+-\xi_{\infty}$, and by \eqref{zxn-infty}, we have  $z_{\infty}(x_{\infty})=0$ (recall that $x_{\infty}\cdot e> \xi_+-\xi_{\infty}$). Moreover,  since $\lambda^-_1(0)>\lambda_1^-(\mu_{\varepsilon,-})$ and $\psi^-(x)$ is positive, passing to the limit as $n\to \infty$ in \eqref{eq-zn} yields
\begin{equation}\label{eq-zinfty}
-\nabla \cdot (A(x)\nabla z_{\infty}(x)) -(\partial_uf(x,0)+\lambda_1^-(\mu_{\varepsilon,-}))z_{\infty}(x)\geq \gamma^* (\lambda^-_1(0)-\lambda_1^-(\mu_{\varepsilon,-}))\psi^-(x) >0
\end{equation}
for $x\in\R^d$ such that $x\cdot e \geq \xi_+-\xi_{\infty}$.  It then follows from the strong maximum principle that $z_{\infty}(x)=0$ for all $x\cdot e \geq \xi_+-\xi_{\infty}$, which is a contradiction with  \eqref{eq-zinfty}.  Therefore, we obtain $\gamma^*=0$,  and the first part of \eqref{u--U-u+} is obtained.
The second part can be proved in a similar way, by defining this time
$$\gamma_*:=\inf\{\gamma\geq 0:\, u_+(x)-U(x)+\gamma \psi^-(x) \geq 0  \hbox{ for }   x \cdot e \geq \xi_+ \},$$
and using the second inequality \eqref{derivatives-u} and the fact that  $\lambda_1^-(\mu_{\varepsilon,+}) <  \lambda_1^-(0)$ to show $\gamma_*=0$.

Finally, taking $C_+=K_+$ and $C_-=K_- \min_{x\in\R^d}\psi_{\varepsilon,-}(x)$, we obtain \eqref{s-decay+}.
We omit the details for the estimates in the case $x\cdot e \leq 0$ whose proof is similar.
\end{proof}

\begin{rmk}\label{rem-decay}
By the concavity of the function $\mu\mapsto \lambda_1^-(\mu)$, the negative root (i.e., $\underline{\mu}$) of the equation $\lambda_1^-(\mu)=c^*\mu$ is decreasing in $c^*\in\R$. This together with the above two lemmas implies that the decay of a pulsating wave of \eqref{eq:main} going to $0$ becomes faster when the speed increases (the other limiting state of such a wave is not necessarily to be $1$).  Similarly, since the positive root (i.e., $\bar{\mu}$) of $\lambda_1^+(\mu)=c^*\mu$ is decreasing in $c^*$, for any pulsating wave of \eqref{eq:main} with $1$ being a limiting state,
the increase in the wave speed slows down the approach of the wave to $1$.
\end{rmk}

\subsection{Proof of Theorem \ref{Continuity of speeds}}\label{main-proof}

For any sequence $(e_n)_{n\in\N}\in \mathbb{S}^{d-1}$ such that $e_n\to e_0$ as $n\to \infty$ for some $e_0\in \mathbb{S}^{d-1}$, we need to show that $c^*(e_n)\to c^*(e_0)$ as $n\to+\infty$. In the arguments below, for convenience,  we denote $c^*_n:=c^*(e_n)$ and $c^*_0:=c^*(e_0)$, and
denote by $U_n$ (resp. $U_0$) a pulsating wave of \eqref{eq:main} connecting $0$ and $1$ in the direction $e_n$ (resp. $e_0$). We emphasize again that the non-stationary wave is unique up to shifts in time while the stationary wave may be not unique. Without loss of generality, we assume that
$$\int_0^1\int_{(0,L_1)\times \cdots\times(0,L_d)} f(x,u)dxdu \geq 0;$$
it then follows from the sign property of wave speeds stated in Theorem \ref{th:tw} (i) that
\begin{equation}\label{sign-speed}
c^*_n\geq 0\,\,\hbox{ for all }\,\, n\in\N\quad \hbox{and}\quad c^*_0\geq 0
\end{equation}
(in the case where $\int_0^1\int_{(0,L_1)\times \cdots\times(0,L_d)} f(x,u)dxdu \leq 0$, we have $c^*_n\leq 0$ for all $n\in\N$ and $c^*_0\leq 0$ which can be treated in a similar way).
Since the sequence $(c^*_n)_{n\in\N}$ is bounded by Theorem \ref{th:tw}, up to extraction of some subsequence, $c^*_n\to c_{\infty}$ as $n\to \infty$ for some $c_{\infty}\geq 0$. Therefore, to prove Theorem \ref{Continuity of speeds}, it suffices to show that
$$c^*_0=c_{\infty}.$$
For clarity, we will proceed with several steps. The arguments in the first step are similar to those used in the proof of \cite[Lemma 2.5]{dg}. For completeness and later use, we will include the details below.

We first give an elementary claim.

\smallskip

{\bf Claim 1.}
 There exists a small constant $\delta>0$ such that, for any stationary solution $u$ of \eqref{eq:main},
 if $0\leq u\leq \delta$, then $u\equiv 0$; if $1-\delta\leq u\leq 1$, then $u\equiv 1$.

We only prove the first assertion, since the second one is similar.  Recall that $\lambda^->0$ is the principal eigenvalue of \eqref{eigenvalue0} and $\psi^-(x)>0$ is the corresponding eigenfunction satisfying the normalization condition \eqref{normalization}.
Since the function $u \mapsto \partial_u f(x,u)$ is  continuous uniformly in $x \in\R^d$, there exists $\delta_0\in (0,1/2)$ such that
$$|\partial_uf(x,0)u-f(x,u)| \leq \frac{\lambda^-}{2}u  \,\, \hbox{ for }\,\, u\in [0,\delta_0],\,\,x\in\R^d.$$
Define $\bar{u}(t,x)=\delta_0e^{-\frac{\lambda^-}{2} t}\psi^-(x)$  for $t\geq 0$, $x\in\R^d$. It is clear that $0<\bar{u}(t,x)\leq \delta_0$ for $t\geq 0$, $x\in\R^d$. Moreover, direct computation gives that
$$\partial_t\bar{u}-\nabla \cdot (A(x)\nabla\bar{u})-f(x,\bar{u})=\left(\partial_uf(x,0)+\frac{\lambda^-}{2}\right)\bar{u}-f(x,\bar{u})\geq 0$$
for $t> 0$, $x\in\R^d$. Now, we choose $\delta:=\delta_0\min_{x\in\R^d} \psi^-(x)$. Then, if the stationary solution $u$ ranging between $0$ and $\delta$, there holds $0\leq u(x)\leq \bar{u}(0,x)$ for $x\in\R^d$. It thus follows from the comparison principle that $0\leq u(x)\leq \bar{u}(t,x)$ for $t\geq 0$, $x\in\R^d$. Passing to the limit as $t\to+\infty$, we immediately obtain $u\equiv 0$. This ends the proof of Claim 1.

\smallskip

{\bf Step 1.} If $c^*_0=0$, then $c_{\infty}=c^*_0=0$.

Assume by contradiction that $c_\infty \neq 0$. Then, by \eqref{sign-speed}, we have $c_\infty > 0$, and without loss of generality, we may assume that $c_n^*>0$ for all $n\in\N$. In this case,
the pulsating wave $U_n=U_n(t,x)$ is increasing in $t\in\R$ (see Theorem \ref{th:tw}), and hence, by continuity, there exists a unique $t_n\in\R$ such that
\begin{equation}\label{normal-UR-1-hd}
\max_{x\in[0,L_1]\times\cdots\times[0,L_d]}U_n(t_n,x)=\delta,
\end{equation}
where $\delta>0$ is the constant provided by Claim 1. Then, by standard parabolic estimates, there exists a function $U_{\infty}\in C^{1,2}(\R\times\R^d)$ such that, up to extraction of a subsequence, $U_n(t+t_n,x)\to U_{\infty}(t,x)$
in $C_{loc}^{1,2}(\R^{d+1})$ as $n\to +\infty$.  Clearly, $0\leq U_{\infty}(t,x)\leq 1$ is an entire solution of \eqref{eq:main}, and it is nondecreasing in $t\in\R$. Moreover, since $\max_{[0,L_1]\times\cdots\times[0,L_d]}U_{\infty}(0,\cdot)=\delta$, the strong maximum principle implies that $0<U_{\infty}<1$ in $\R\times \R^d$.
Furthermore, by the definition of non-stationary pulsating waves, for any $k\in \Z^d$, there holds
\begin{equation}\label{period-Un}
U_n\left(t+\frac{kL \cdot e_n}{ c^*_n}, \,x+kL\right) = U_n(t,x) \  \,\hbox{ for }\, t\in\R,\,x\in\R^d.
\end{equation}
Since $c_\infty >0$, taking the limit as $n\to +\infty$ gives that for any $k\in \Z^d$,
\begin{equation}\label{limit-pulsating-hd}
U_{\infty}\left(t+\frac{kL \cdot e_0}{c_\infty},\,x+kL\right)=U_{\infty}(t,x)  \ \,\hbox{ for } \,  t\in\R,\,x\in\R^d.
\end{equation}
By the monotonicity of $U_{\infty}$ in $t$ and standard parabolic estimates, there are two $L$-periodic steady states $u_{\pm}(x)$ of \eqref{eq:main} such that
\begin{equation*}
 U_{\infty}(t,x) \to  u_{\pm}(x)  \,\hbox{ as } \,  t\to\pm\infty\,  \hbox{ locally uniformly for }\, x\in\R^d.
 \end{equation*}
Since
$$0\leq \max_{[0,L_1]\times\cdots\times[0,L_d]}u_{-}(\cdot)  \leq \max_{[0,L_1]\times\cdots\times[0,L_d]}U_{\infty}(0,\cdot)=\delta\leq \max_{[0,L_1]\times\cdots\times[0,L_d]}u_{+}(\cdot)\leq 1,$$
it follows from Claim 1 that $u_{-}\equiv 0$.

Next, by the strong maximum principle, either $u_{+}\equiv 1$ or $0<u_{+}< 1$. The former case is impossible. Otherwise, $U_{\infty}(t,x)$ would be a non-stationary pulsating wave of \eqref{eq:main} connecting $0$ and $1$ in the direction $e_0$, while we have assumed that the wave in this direction is stationary (i.e., $c^*_0=0$), which is a contradiction with the uniqueness of wave speeds (see Theorem \ref{th:tw}).

It remains to consider the case where $0<u_{+}< 1$. Recall that $c^*_0=0$ and $U_0=U_0(x)$ is a stationary wave connecting $0$ and $1$ in the direction $e_0$.
We will find a contradiction by showing the following claim:

\smallskip

{\bf Claim 2.} There exists $t_0\leq  0$ such that
\begin{equation*}
U_0(x) \geq  U_{\infty}(t_0,x) \,\,\hbox{ in }\,\, \R^d.
\end{equation*}

For any $\mu\in\R$, let $\lambda_1^-(\mu)$ be the principal eigenvalue of \eqref{p-lambda-} with $e=e_0$, and let $\mu_1$, $\mu_2$ be, respectively, the negative roots of $\lambda_1^-(\mu)=0$ and $\lambda_1^-(\mu)=c_{\infty}\mu$. Since $c_{\infty}>0$, one sees from Remark \ref{rem-decay} that  $\mu_2<\mu_1<0$. Denote $\eta_0:=(\mu_1-\mu_2)/4>0$.
Then, on the one hand, since $U_0(x)$ is a stationary wave connecting $0$ nd $1$, it follows from Lemma \ref{exp-decay-hd-2} that there exists some constant $C_1>0$ such that
$$U_0(x)\geq C_1 e^{(\mu_1-\eta_0)x\cdot e_0}\,\,\hbox{ for all }\,\,x\cdot e_0 \geq 0. $$
On the other hand, notice that $U_{\infty}(t,x)$ is a non-stationary pulsating wave of \eqref{eq:main} connecting $0$ and $u_+$ (with $0$ being linearly stable). By Lemma \ref{exp-decay-hd-1}, one finds some constant $C_2>0$ such that
$$ U_{\infty}(t,x) \leq  C_2 e^{(\mu_2+\eta_0)(x\cdot e_0-c_{\infty}t)} \,\,\hbox{ for all }\,\, x\cdot e_0-c_{\infty}t\geq 0.$$
Since $\mu_2+\eta_0<\mu_1-\eta_0$, this in particular implies that $U_{\infty}(0,x)$ decays faster than $U_0(x)$ as $x\cdot e_0\to +\infty$. Therefore, there exists some $M_1>0$ such that
\begin{equation}\label{U-Uinfty}
U_0(x) \geq   U_{\infty}(0,x)  \,\, \hbox{ for all }\,\, x\cdot e_0\geq M_1.
\end{equation}

Now, suppose by contradiction that Claim 2 is not true. Then, by \eqref{U-Uinfty} and the fact that $U_{\infty}(t,x)$ is nondecreasing in $t\in\R$, one finds sequences $(t_n)_{n\in\N}\subset (-\infty,0]$ and $(x_n)_{n\in\N}\subset\R^d$ such that $t_n\to -\infty$ as $n\to +\infty$, and that for each $n\in\N$, $x_n \cdot e_0 < M_1$ and
$U_0(x_n) <  U_{\infty}(t_n,x_n)$.
However, this is impossible. Indeed, if the sequence $(x_n \cdot e_0)_{n\in\N}$ is bounded, then $\liminf_{n\to +\infty}U_0(x_n)>0$, while $U_{\infty}(t_n,x_n)\to 0$ as $n \to +\infty$ (since $u_-\equiv 0$). On the other hand, if $(x_n \cdot e_0)_{n\in\N}$ is unbounded, then
$\limsup_{n\to +\infty}U_0(x_n) = 1$, while
$0< U_{\infty}(t_n,x_n)\leq u_+(x_n)$ for all $n\in\N$ (recall that $u_+$ is a periodic function strictly between $0$ and $1$), whence $\limsup_{n\to+\infty}U_{\infty}(t_n,x_n)<1$.
This completes the proof of Claim 2.

It then follows from the comparison principle that
$$U_0(x) \geq U_{\infty}(t,x)  \,\,\hbox{ for all }\, \, t\geq t_0,\,x\in\R^d.$$
Passing to the limit as $t \to +\infty$, we get that $U_0\geq u_+$ in $\R^d$, which contradicts the fact that $U_0(x)\to 0$ as $x\cdot e_0\to +\infty$. This means that our assumption $c_{\infty}>0$ at the beginning was false. Therefore, $c_{\infty}=0$ and Step 1 is complete.

In the remaining steps, we prove $c^*_0=c_{\infty}$ in the case where $c^*_0>0$. We first show that:

\smallskip

{\bf Step 2}: If $c^*_0>0$, then $c^*_n>0$ for all large $n\in\N$.

Assume by contradiction that $c_n=0$ for all large $n\in\N$. Then, by the definition of stationary pulsating waves, for each large $n\in\N$, $U_n(x)\to 1$ as $x\cdot e_n\to -\infty$ and $U_n(x)\to 0$ as $x\cdot e_n\to +\infty$. This implies that there exists some $x_n\in \R^d$ such that
$U_n(x_n)=1-\delta$, and that
$$ 1-\delta \leq U_n(x) <1 \,\,\hbox{ for all }  x\in\R^d\,\,\hbox{ such that }\,\, x\cdot e_n \leq x_n\cdot e_n-1,  $$
where $\delta>0$ is given by Claim 1.
Write $x_n=x_n'+x_n''$ with $x_n'\in [0,L_1]\times\cdots\times[0,L_d]$ and $x_n''\in L\Z^d$, and define
$$V_n(x):=U_n(x+x_n'')\,\,\hbox{ for }\,\,x\in\R^d. $$
By the periodicity, $V_n(x)$ remains a stationary pulsating wave of \eqref{eq:main} in the direction $e_n$. It is also easily seen that $V_n(x_n')= 1-\delta$ and
$$ 1-\delta \leq V_n(x) <1 \,\,\hbox{ for all }  x\in\R^d\,\,\hbox{ such that }\,\, x\cdot e_n \leq -\kappa-1, $$
where $\kappa>0$ is a constant given by
\begin{equation}\label{de-kappa}
\kappa=\sqrt{\sum_{i=1}^d L^2_i}.
\end{equation}
Up to extraction of some subsequence, we may assume that $x_n'\to x_{\infty}$ as $n\to+\infty$, and by standard elliptic estimates, there is a $C^2(\R^d)$ function $V_{\infty}$ such that
$V_n\to V_{\infty}$ in $C^2_{loc}(\R^d)$ as $n\to+\infty$. Clearly, $V_{\infty}(x)$ is an entire solution of \eqref{eq:main} satisfying
\begin{equation}\label{Vinfty-delta}
V_{\infty}(x_{\infty})=1-\delta,\quad \hbox{and}\quad  1-\delta\leq  V_{\infty}(x)\leq 1 \,\,\hbox{ for all }\,\, x\cdot e_0\leq -\kappa-1.
\end{equation}
Furthermore, by the strong maximum principle, we have $0<V_{\infty}<1$ in $\R^d$. Next, we show the following claim:

\smallskip

{\bf Claim 3.} $V_{\infty}(x)\to 1$ as $x\cdot e_0\to -\infty$.

Let $(y_m)_{m\in\N} \in\R^d$ be an arbitrary sequence such that $y_m\cdot e_0\to -\infty$ as $m\to+\infty$. It suffices to show that $V_{\infty}(y_m)\to 1$ as $m\to+\infty$. For each $m\in\N$, write $y_m=y'_m+y''_m$ with  $y'_m\in [0,L_1]\times\cdots\times[0,L_d]$ and $y''_m\in L\Z^d$, and define $W_m(x):=V_{\infty}(x+y_m'')$ for $x\in\R^d$. It is clear that $y_m''\cdot e_0 \to -\infty$ as $m\to+\infty$, and for each $m\in\N$, $W_m(x)$ is a stationary solution of \eqref{eq:main}. Then, by standard elliptic estimates, there exists a subsequence $(W_{m_j})_{j\in\N}$ converging in $C^2_{loc}(\R^2)$ to some function $W_{\infty}\in C^2(\R)$ which remains a stationary solution of \eqref{eq:main}. Due to \eqref{Vinfty-delta}, one checks that $1-\delta\leq W_{\infty}\leq 1$ in $\R^d$. It then follows from Claim 1 that $W_{\infty}\equiv 1$. By the uniqueness of the limit, the whole sequence $W_m$ converges to $1$ in $C^2_{loc}(\R^2)$, whence by the boundedness of the sequence $(y_m')_{m\in\N}$, we obtained that $V_{\infty}(y_m)\to 1$ as $m\to+\infty$.

Now, recall that $c^*_0>0$ and $U_0(t,x)$ is a non-stationary wave connecting $0$ and $1$ in the direction $e_0$. By the definition of pulsating waves, Claim 2 and the fact that $0<V_{\infty}<1$, for any small $\epsilon_0>0$,
one finds some $k_0\in\Z^d$ (with $k_0L\cdot e_0$ sufficiently large) such that
$$V_\infty(x)\geq U_0(0,x+k_0L)-\varepsilon_0 [\rho(x\cdot e_0)\psi^+(x)+(1-\rho(x\cdot e_0)-ct))\psi^-(x)] \,\, \hbox{ for all }\,\,  x\in \R^d,$$
where $\rho\in C^2(\R,[0,1])$ is a function satisfying \eqref{rho}, and $\psi^+$, $\psi^-$ are, respectively, the principal eigenfunctions of
\eqref{eigenvalue1} and \eqref{eigenvalue0} with the normalization \eqref{normalization}. It then follows from Lemma \ref {sub-super-nonzero} and the comparison principle that
 \begin{equation}\label{Vinfty-U0}
 V_\infty(x) \geq  U_0(t-\varepsilon_0 K_0(1-e^{-\mu_0 t}), x+k_0L)-\varepsilon_0 e^{-\mu_0 t} \,\,\hbox{ for }\,\, t\geq 0,\, x\in\R^d,
 \end{equation}
for some constants $\mu_0>0$ and $K_0>0$ (notice that $c_0^*>0$). Passing to the limit as $t\to+\infty$ in the above inequality, we get that $V_{\infty}\geq 1$ in $\R^d$, which is impossible.
Therefore, $c_n^*$ must be positive for all large $n$.

As a consequence, in what follows, we may assume without loss of generality that $c_n>0$ for all $n\in\N$.

\smallskip

{\bf Step 3.} If $c^*_0>0$, then $c_\infty >0$.

Notice that each $U_n$ is non-stationary and increasing in the first variable. By continuity, we find a unique $s_n\in\R$ such that
\begin{equation}\label{normal-UR-2-hd}
\min_{x\in[0,L_1]\times\cdots\times[0,L_d]}U_n(s_n,x)=1-\delta.
\end{equation}
By standard parabolic estimates, up to extraction of a subsequence, the functions $(t,x)\mapsto U_n(t+s_n,x) $ converge in $C_{loc}^{1,2}(\R\times\R^d)$ to an entire solution $0< \tilde{U}_{\infty}(t,x)< 1$ of~\eqref{eq:main}, such that $\min_{x\in [0,L_1]\times\cdots\times[0,L_d]}\tilde{U}_{\infty}(0,x)=1-\delta$ and $\tilde{U}_{\infty}(t,x)$ is nondecreasing in $t\in\R$. It also follows that there exists a stationary solution $0\leq \tilde{u}_-\leq 1$ of \eqref{eq:main} such that
$ \tilde{U}_{\infty}(t,x)\to \tilde{u}_-(x)$ as  $t\to -\infty$ in $C_{loc}^{2}(\R^d)$.
Since
$$ \min\limits_{[0,L_1]\times\cdots\times[0,L_d]}\tilde{u}_-(\cdot)\leq \min\limits_{[0,L_1]\times\cdots\times[0,L_d]}\tilde{U}_{\infty}(0,\cdot)=1-\delta,$$
by the strong maximum principle, we have $\tilde{u}_-<1$ in $\R^d$.

Now, by supposing to the contrary that $c_\infty = 0$, we show
\begin{equation}\label{tildeu-infty}
\tilde{u}_-(x)\to 1 \,\,\hbox{ as }\,\, x\cdot e_0\to -\infty.
\end{equation}
Let $t\in\R$ and $x\in\R^d$ satisfying $x\cdot e_0 < -\kappa-1$ be arbitrarily fixed, where $\kappa>0$ is given by \eqref{de-kappa}.
It is clear that there exists $k_*\in \Z^d$ such that $x-k_*L\in [0,L_1]\times\cdots\times[0,L_d]$. Since $e_n\to e_0$ as $n\to\infty$, one has $x\cdot e_n \leq -\kappa-1$ for all large $n$, whence $k_*L\cdot e_n\leq -1$. Since $c_n^*>0$ for all $n\in\N$ and since $c_n^*\to c_{\infty}=0$ as $n\to +\infty$, this implies that
$t-k_*L\cdot e_n/c_n^*\geq 0$ for all $n$ large enough.
Then, by \eqref{period-Un} and the monotonicity of $U_n$ with respect to $t$, it follows that
\begin{equation*}
\left.\begin{array}{ll}
U_n(t+s_n,x)\!\!\!&=\displaystyle U_n(t+s_n-\frac{k_*L\cdot e_n}{c_n^*},x-k_*L) \vspace{5pt}\\
&\geq \min \limits_{[0,L_1]\times\cdots\times[0,L_d]}U_n(s_n,\cdot)=1-\delta.
\end{array}\right.
\end{equation*}
Passing to the limit as $n\to +\infty$ followed by letting $t\to -\infty$, we obtain that
$$1-\delta \leq \tilde{u}_-(x)<1\,\, \hbox{ for all } \,\, x\cdot e_0 <-\kappa-1.$$
Then, \eqref{tildeu-infty} follows directly from the proof of Claim 3.

As a consequence, the same reasoning in showing \eqref{Vinfty-U0} implies that there exists some $\tilde{k}_0\in \Z^d$ such that
 $$\tilde{u}_-(x) \geq  U_0(t-\varepsilon_0 K_0(1-e^{-\mu_0 t}), x+\tilde{k}_0L)-\varepsilon_0 e^{-\mu_0 t} \,\,\hbox{ for }\,\, t\geq 0,\, x\in\R^d.$$
We now reach a contradiction by letting $t\to +\infty$. Therefore, we have $c_{\infty}>0$.  As a consequence,
$\tilde{U}_{\infty}$  satisfies \eqref{limit-pulsating-hd}, and hence, $0\leq \tilde{u}_-<1$ is an $L$-periodic steady state of \eqref{eq:main}.

\smallskip

{\bf Step 4.} If $c^*_0>0$, then $c^*_0=c_{\infty}$.

Assume by contradiction that $c_\infty \neq c^*_0$. Then by Step 3,
$$\hbox{ either }\,\, c_\infty> c^*_0>0 \,\,\hbox{ or } \,\, 0<c_\infty< c^*_0.$$
Let us first derive a contradiction in the former case.
Since each $U_n$ is non-stationary, one finds a sequence $(t_n)_{n\in\N}\subset \R$ satisfying \eqref{normal-UR-1-hd}.
Proceeding similarly as in Step 1, we obtain a pulsating wave $U_{\infty}(t,x)$ of \eqref{eq:main} with speed $c_{\infty}$ in the direction $e_0$, and it either connects $0$ and $1$ or it connects $0$ and $u_+$ ($u_+$ ranges strictly between $0$ and $1$). The former is impossible, due to the uniqueness of wave speeds for bistable pulsating waves (notice that $U_0$ is a pulsating wave with speed $c^*_0$ connecting $0$ and $1$ in the same direction). In the latter case, since the decay of a pulsating wave going to 0 becomes faster when the speed increases (see Remark \ref{rem-decay}), it follows from similar arguments to those used in the proof of Claim 2 that
$ U_{\infty}(\tau_0,x)\leq  U(0,x)$ in $\R^d$ for some $\tau_0\in\R$. Then, the comparison principle implies that
$$U_{\infty}(t+\tau_0,x)\leq  U_0(t,x) \ \,\hbox{ for }\, t\geq 0,\, x\in\R^d .$$
This contradicts our assumption $ c_\infty >c^*_0$.

On the other hand, in the case where $0<c_\infty< c^*_0$, we take the normalization condition \eqref{normal-UR-2-hd} and let $\tilde{U}_{\infty}(t,x)$ be the function obtained in Step 3. Similarly, $\tilde{U}_{\infty}(t,x)$ is a pulsating wave either connecting $0$ and $1$ or connecting $\tilde{u}_-$ ( $\tilde{u}_-$ is an $L$-periodic steady state of \eqref{eq:main} strictly between $0$ and $1$) and $1$. As argued above, the former contradicts the uniqueness of bistable wave speeds; the latter also leads to a contradiction, as one can find some $\tilde{\tau}_0\in\R$ such that
$$\tilde{U}_{\infty}(t+\tilde{\tau}_0,x)\geq  U_0(t,x) \ \,\hbox{ for }\, t\geq 0,\, x\in\R^d .$$

Combining the above, we reach that $c^*_0=c_{\infty}$. The proof of Theorem \ref{Continuity of speeds} is thus complete.

\section{Existence of pulsating waves in rapidly oscillating media}

\subsection{Proof of Proposition \ref{condi}}

We show Proposition \ref{condi} by using the notion of propagating terrace which consists of a finite sequence of stacked waves. The notion was first introduced by \cite{dgm} in the spatially periodic case. For our homogenous equation \eqref{onehom}, the definition of propagating terrace is presented as follows.

\begin{defi}\label{terrace}
A propagating terrace of \eqref{onehom} connecting $0$ and $1$ is a pair of finite sequences $(p_k)_{0\leq k\leq N}$ and $(V_k)_{1\leq k\leq N}$ such that 
\begin{itemize}
\item  Each $p_k$ is a zero of $\bar{f}$ satisfying  $1=p_0>p_1>\cdots>p_N=0$; 
\item For each $1\leq k \leq N$, $V_k(\cdot)$ is a traveling wave of \eqref{onehom} connecting $p_k$ and $p_{k-1}$ $($$V_k(-\infty)=p_{k-1}$ and $V_k(+\infty)=p_k$$)$ with speed $c_k\in\R$; 
\item The sequence of speeds $(c_k)_{1\leq k\leq N}$ satisfies $c_1\leq c_2 \leq \cdots\leq c_N$. 
\end{itemize}
\end{defi} 

To prove Proposition \ref{condi}, we first give the existence of propagating terrace connecting $0$ and $1$, and then show that the terrace consists of a single traveling wave. The former part is ensured by the following lemma, which follows directly from \cite[Theorem 1]{po2} together with its followed discussion. Recall that $F: [0,1]\to \R$ is the primitive function of $\bar{f}$ defined in \eqref{de-F}.  

\begin{lem}[\cite{po2}]\label{exist-terace}
If the point $u = 1$ is a unique maximizer of the function $F$ in $[0,1]$ and it is an isolated zero of $\bar{f}$ in $[0,1]$, then equation \eqref{onehom} admits a propagating terrace connecting $0$ and $1$ with positive traveling wave speeds. Similarly, 
if $u=0$ is a unique maximizer of $F$ in $[0,1]$ and an isolated zero of $\bar{f}$ in $[0,1]$, then there exists a propagating terrace 
connecting $0$ and $1$ with negative speeds.  
\end{lem}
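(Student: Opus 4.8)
The plan is to separate the statement into its two ingredients: the \emph{existence} of the terrace together with the \emph{ordering} of its speeds, which is the genuinely hard analytic input, and the \emph{identification of the signs} of the speeds, which then reduces to an elementary energy computation. For the first ingredient I would invoke \cite[Theorem 1]{po2}: under the hypothesis that $u=1$ is an isolated zero of $\bar f$ and the unique maximizer of $F$ on $[0,1]$, the front-like dynamics of \eqref{onehom} organizes into a propagating terrace in the sense of Definition \ref{terrace} connecting $0$ and $1$, whose speeds come automatically ordered as $c_1\le c_2\le\cdots\le c_N$. The isolated-zero hypothesis is precisely what guarantees that the uppermost plateau of the terrace is $1$ itself and that the adjacent zero $p_1$ is a well-defined point with $p_1<1$; this is the only role played by the heavy machinery, and everything that follows is self-contained.

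The second step is the energy identity for a single layer. Each profile $V_k$ solves $A_0V_k''+c_kV_k'+\bar f(V_k)=0$ with $V_k(-\infty)=p_{k-1}$ and $V_k(+\infty)=p_k$. Multiplying by $V_k'$ and integrating over $[-R,R]$ gives
\be
\frac{A_0}{2}\big[(V_k')^2\big]_{-R}^{R}+c_k\int_{-R}^{R}(V_k')^2\,d\xi+\big[F(V_k)\big]_{-R}^{R}=0 .
\ee
Letting $R\to+\infty$, the boundary terms involving $(V_k')^2$ vanish because $V_k'\to0$ at both ends, while $\big[F(V_k)\big]_{-R}^{R}\to F(p_k)-F(p_{k-1})$; hence $c_k\int_{\R}(V_k')^2\,d\xi=F(p_{k-1})-F(p_k)$ once the energy integral is known to converge. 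This limit passage does double duty: if $F(p_{k-1})\neq F(p_k)$ it forces $c_k\neq0$, after which $c_k\int_{-R}^{R}(V_k')^2$ having a finite limit shows $\int_{\R}(V_k')^2<\infty$, and since this integral is strictly positive one obtains $\mathrm{sign}(c_k)=\mathrm{sign}(F(p_{k-1})-F(p_k))$. Arguing this way I avoid any a priori control of the decay rate of $V_k'$, which could be slow at a degenerate endpoint (note that Proposition \ref{condi} does not assume $\bar f'(0)<0$ or $\bar f'(1)<0$).

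The final step propagates the sign along the terrace. In the first case, where $1$ is the unique maximizer of $F$, the topmost layer satisfies $F(p_0)-F(p_1)=F(1)-F(p_1)>0$ because $p_1<1$ and $1$ is the \emph{unique} maximizer; the identity then gives $c_1>0$, and the ordering $c_1\le c_2\le\cdots\le c_N$ forces $c_k\ge c_1>0$ for every $k$, i.e.\ all speeds are positive. In the symmetric case, where $0$ is the unique maximizer, the bottom layer gives $F(p_{N-1})-F(p_N)=F(p_{N-1})-F(0)<0$, so $c_N<0$, and the same ordering yields $c_k\le c_N<0$ for all $k$. I expect the only real obstacle to be the first ingredient — that the connection from $1$ to $0$ genuinely decomposes into finitely many monotone waves with monotone speeds — which is exactly the structural fact borrowed from \cite{po2}; once that decomposition and its speed ordering are in hand, the maximizer hypothesis serves only to fix the sign of the single extremal speed, and the ordering does the rest.
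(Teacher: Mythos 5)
Your proposal is correct, but it divides the labor differently from the paper. The paper's entire ``proof'' of this lemma is the citation itself: both the existence of the terrace \emph{and} the signs of its speeds are taken wholesale from \cite[Theorem 1]{po2} ``together with its followed discussion,'' with no further argument in the text. You instead ask less of the black box — only that a terrace in the sense of Definition \ref{terrace} exists (the ordering $c_1\le\cdots\le c_N$ being built into that definition) — and you recover the signs yourself from the energy identity $c_k\int_{\R}(V_k')^2\,d\xi=F(p_{k-1})-F(p_k)$ applied to the extremal layer ($k=1$ when $1$ is the unique maximizer, $k=N$ when $0$ is), after which the ordering propagates the sign to all layers. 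This is sound, and in fact your finite-interval version of the identity, where the boundary terms are passed to the limit \emph{before} the integrability of $(V_k')^2$ is known, is more careful than the paper's own use of the very same identity in the proof of Proposition \ref{condi}, where it integrates over $\R$ outright; your version also correctly copes with endpoints where $\bar f'$ need not be negative. What each approach buys: the paper's is shorter and consistent with attributing the whole lemma to \cite{po2}; yours makes the sign assertion self-contained and robust to exactly what \cite[Theorem 1]{po2} states about speeds. The one step you gloss over is existence in the second case: when $0$ is the unique maximizer, the terrace still has to come from \cite{po2} applied to the reflected equation $w=1-v$, $g(w)=-\bar f(1-w)$, whose potential $G(w)=F(1-w)-F(1)$ is uniquely maximized at $w=1$ precisely when $F$ is uniquely maximized at $0$; this reflection (presumably the ``followed discussion'' the paper invokes) is harmless but is hidden in your ``symmetric case.''
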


We remark that the terraces obtained above are unique up to translations and globally asymptotically stable. As these properties are not needed in showing our results, we do not state them here.

\begin{proof}[Proof of Proposition \ref{condi}]
We first assume condition (a) and  show the existence of traveling wave connecting $0$ and $1$ with negative speed. 
Clearly, under condition (a), by the continuity of the function $F:[0,1]\to \R$, two possibilities may happen: 
\begin{itemize}
\item $F(u)$ is decreasing in $[0,1]$; 
\item there exists some $u^*\in (0,1)$ such that 
\begin{equation}\label{find-u-s}
F(u^*)=F(1)<0,\,\,\,  \bar{f}< 0\, \hbox{ for } \, u\in (0,u^*]\,\,\, \hbox{and}\,\,\, F(u)\leq  F(u^*)\,\hbox{ for } \, u\in (u^*,1).
\end{equation}
\end{itemize}

In the former, $u=0$ and $u=1$ are the only two zeros of $\bar{f}$ in $[0,1]$, and $\bar{f}$ is of the monostable type ($u=0$ is stable while $u=1$ is unstable). It is then well known \cite{aw} that there exists $c_*<0$ such that \eqref{onehom} admits a traveling wave connecting $0$ and $1$ with speed $c$ if and only if $c\leq c_*$. Then, nothing is left to prove in this case. 

In the latter,  $u=0$ is a unique maximizer of $F$ in $[0,1]$ and an isolated zero of $\bar{f}$ in $[0,1]$.  Then by Lemma \ref{exist-terace}, there exists a propagating terrace $((p_k)_{0\leq k\leq N},(V_k)_{1\leq k\leq N})$ connecting 0 and 1 with speeds  $c_1\leq c_2 \leq \cdots \leq c_N < 0$. It remains to show that $N=1$. Assume by contradiction that $N> 1$. Then $V_1(-\infty)=1$ and $V_1(+\infty)=p_1$ with $p_1 \in (u^*,1)$, since $\bar{f}$ has no zero in $(0,u^*)$. By the last inequality in \eqref{find-u-s}, we have $F(1)\geq F(p_1)$. Moreover, since $V_1$ is a traveling wave of \eqref{onehom}, 
it satisfies $A_0V_1''+c_1V_1'+\bar{f}(V_1)=0$ in $\R$. Multiplying this equation by $V_1'$ and integrating over $\R$ gives 
$$sign(c_1)=sign(F(1)-F(p_1))\geq 0, $$
which is a contradiction with the fact $c_1<0$. Therefore, $N=1$, and $V_1$ is a traveling wave of \eqref{onehom} connecting $0$ and $1$ with speed $c_1<0$. 

The situation with positive wave speed can be handled analogously. Indeed, condition (b) implies that either $F(u)$ is increasing in $[0,1]$ or
$u=1$ is a unique maximizer of $F$ in $[0,1]$ and an isolated zero of $\bar{f}$ in $[0,1]$. 
In the former,  \eqref{onehom} admits a family of traveling waves connecting $0$ and $1$ with positive speeds. In the latter,  Lemma \ref{exist-terace} gives the existence of propagating terrace $((\tilde{p}_k)_{0\leq k\leq N},(\tilde{V}_k)_{1\leq k\leq N})$ with speeds $0<\tilde{c}_1\leq \tilde{c}_2 \leq \cdots \leq \tilde{c}_N$. Proceeding similarly as above, by working this time with the wave $\tilde{V}_N$, one can conclude that $N=1$. This ends the proof of Proposition \ref{condi}. 
\end{proof}

\subsection{Proof of Theorem \ref{thhomo}}

In this subsection, we fix a direction $e\in \mathbb{S}^{d-1}$, assume that \eqref{onehom} admits a traveling wave $\phi_0$ with speed $c_0\neq 0$, 
and show  the existence of non-stationary pulsating wave of type \eqref{exist-small} for \eqref{eql} when $l$ is small.  Without loss of generality, we assume that $c_0>0$. 

Upon substitution, a non-stationary pulsating wave $(\phi_l, c^*_l)$ of \eqref{eql} satisfies 
\begin{equation}\label{pulsolu}\left\{\baa{l}
\tilde{\partial}_{l}\cdot(A(y)\tilde{\partial}_{l}\phi_{l})+c^*_{l}\partial_{\xi}\phi_{l}+f(y,\phi_{l})=0 \hbox{ for all }(\xi,y)\in\R\times\R^d,\vspace{3pt}\\
\phi_{l}(\xi,y)\hbox{ is $L$-periodic in $y\in\R^d$},\vspace{3pt}\eaa\right.
\end{equation}
with the limiting conditions  
\begin{equation}\label{con-limit}
\phi_{l}(-\infty,y)=1,\quad \phi_{l}(+\infty,y)=0\,\,\hbox{ uniformly in }\,\, y\in\R^d, 
\end{equation}
where
$$\tilde{\partial}_{l} = e\partial_{\xi}+\frac{1}{l}\nabla_{y}.$$
Below, we use the implicit function theorem to derive the existence of $(\phi_l, c^*_l)$ with $c^*_l\neq 0$  when $l$ is small. Since the proof follows the same lines as those used in the one-dimensional case \cite[Theorem 1.2]{dhz1}, we only give its outline, and provide the details when considerable modifications are needed. 

We set some notations. Let $L^{2}(\R\times\T^d)$ and $H^{1}(\R\times \T^d)$ be two Banach spaces given by 
$$\baa{rcl}
L^{2}(\R\times\T^d)=\{v\in L^{2}_{loc}(\R\times \R^d):\ v\in L^{2}(\R\times (0,L_1) \times \cdots \times (0,L_d))  \vspace{3pt}\\
\hbox{ and } v(\xi,y) \hbox{ is $L$-periodic in $y$ almost everywhere in } \R^{d+1}  \},
\eaa$$
$$\baa{rcl}
H^{1}(\R\times\T^d)=\{v\in H^{1}_{loc}(\R\times \R^d):\ v\in H^{1}(\R\times (0,L_1) \times \cdots \times (0,L_d))  \vspace{3pt}\\
\hbox{ and } v(\xi,y) \hbox{ is $L$-periodic in $y$ almost everywhere in } \R^{d+1}  \},
\eaa$$
endowed with the norms  $\Vert v\Vert_{L^{2}(\R\times \T^d)} =\Vert v\Vert_{L^{2}(\R\times (0,L_1)\times \cdots (0,L_d))}$ and $$\Vert v\Vert_{H^{1}(\R\times \T^d)}=\left(\Vert v\Vert_{L^{2}(\R\times\T^d)}^{2}+\Vert \partial_{\xi}v\Vert_{L^{2}(\R\times\T^d)}^{2} +\sum_{i=1}^{d}\Vert \partial_{y_{i}}v\Vert_{L^{2}(\R\times \T^d)}^{2}\right)^{1/2}.$$
Fix a real $\beta>0$ and for any $c>0$ and $l\in \R^*:=\R\setminus\{0\}$, define 
$$M_{c,l}(v)=\tilde{\partial}_{l}\cdot(A(y)\tilde{\partial}_{l}v)+c\partial_{\xi}v-\beta v$$
for $v\in \mathcal{D}_{l}:=\{v\in H^{1}(\R\times \T^d):\, \tilde{\partial}_{l}\cdot(A(y)\tilde{\partial}_{l}v)\in L^{2}(\R\times \T^d)\}$, and
$$M_{c,0}(v)=A_{0}v''+cv'-\beta v \,\,\hbox{ for } \,\,v\in H^2(\mathbb{R}).$$
 By the proof of \cite[Lemma 3.1]{dhz1},
the operators $M_{c,0}: H^2(\R)\, \rightarrow \, L^2(\R)$ and $M_{c,l}: \mathcal{D}_l\, \rightarrow \, L^2(\R\times \T^d)$ for $l\neq0$ are invertible for every $c>0$. Furthermore, for every $r_1> 0$ and $r_2>0$, there is a constant $C=C(r_1,r_2,\beta,A)$ such that for all $c\geq r_1$, $|l|\leq r_2$, $g\in L^2(\R\times\T^d)$ and $\varpi\in L^2(\R)$,
\begin{equation}\label{inverse-1}
\left\{
\begin{array}{l}
\|M_{c,l}^{-1}(g)\|_{H^1(\R\times\T^d)} \leq C\|g\|_{L^2(\R\times\T^d)}, \vspace{5pt}\\
\|\nabla_{y} M_{c,l}^{-1}(g)\|_{L^2(\R\times\T^d)} \leq C|l|\|g\|_{L^2(\R\times\T^d)},
\end{array}\right. \hbox {if }\,\,l\neq 0,
\end{equation}
and 
\begin{equation}\label{inverse-2}
\|M_{c,0}^{-1}(\varpi)\|_{H^1(\R)} \leq C\|\varpi\|_{L^2(\R)}.
\end{equation}

Next, we check that the operator $M_{c_n,l_n}^{-1}$ converges to $M_{c,l}^{-1}$  as $n\to\infty$ when $c_n\to c\in (0,+\infty)$ and $l_n\to l\in\R$. In the case where $l\neq 0$, the verification is identical to that of \cite[Lemma 3.3]{dhz1}; therefore, we omit the details.  On the other hand, if $l= 0$, the proof is inspired by that of \cite[Lemma 3.2]{dhz1} but considerable modifications are needed. Moreover, from this case, one can see the role that the function $\chi$ (determined by \eqref{de-chi}) plays in the homogenization process for the multi-dimensional equation \eqref{eql}. 

\begin{lem}\label{continuem}
Fix $\beta>0$ and $c>0$. For any sequences $(g_n)_{n\in\N}$ in $L^2(\R\times\T^d)$, $(\varpi_n)_{n\in\N}$ in $L^2(\R)$, $(c_n)_{n\in\N}$ in $(0,+\infty)$ and~$(l_n)_{n\in\N}$ in $\R^*$ satisfies $\|g_n-g\|_{L^2(\R\times\T^d)}\to0$, $\|\varpi_n-\bar{g}\|_{L^2(\R)}\to0$, $c_n\to c$ and $l_n\to0$ as~$n\to+\infty$, then when $n\to+\infty$, 
\be\label{convinverse1}
M_{c_n,l_n}^{-1}(g_n) \to M_{c,0}^{-1}(\bar{g})\,\, \hbox{ in } \,\,H^1(\R\times\T^d),\vspace{3pt}
\ee
and
\be\label{convinverse2}
M_{c_n,0}^{-1}(\varpi_n) \to  M_{c,0}^{-1}(\bar{g})\,\, \hbox{ in } \,\, H^1(\R),
\ee
where $\bar{g}\in L^2(\R)$ is defined as \eqref{average}.  Furthermore, the limits \eqref{convinverse1} and  \eqref{convinverse2} are uniform in the ball $B_M=\big\{g\in H^1(\R\times\T^d)\,|\,\|g\|_{H^1(\R\times\T^d)}\le M\big\}$ for every $M>0$.
\end{lem}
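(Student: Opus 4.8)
The plan is to treat the elementary one-dimensional limit \eqref{convinverse2} directly, and to obtain the homogenization limit \eqref{convinverse1} via the corrector $\chi$ from \eqref{de-chi}. For \eqref{convinverse2}, writing $w=M_{c,0}^{-1}(\bar g)\in H^2(\R)$, one has $M_{c_n,0}^{-1}(\varpi_n)-w=M_{c_n,0}^{-1}\big(\varpi_n-\bar g+(c-c_n)w'\big)$, whose $H^1(\R)$ norm is at most $C(\|\varpi_n-\bar g\|_{L^2(\R)}+|c-c_n|\,\|w'\|_{L^2(\R)})\to0$ by the uniform bound \eqref{inverse-2}; since $\|w'\|_{L^2}\lesssim\|\bar g\|_{L^2}\le\|g\|_{H^1(\R\times\T^d)}$, this is uniform over $B_M$. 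For \eqref{convinverse1}, because each $M_{c_n,l_n}^{-1}$ is bounded uniformly in $n$ by \eqref{inverse-1}, it suffices to prove the limit for $g$ fixed in $H^1(\R\times\T^d)$ with a rate depending only on $\|g\|_{H^1}$, $|c_n-c|$ and $l_n$: the general case $\|g_n-g\|_{L^2}\to0$ with $g\in L^2$ then follows from $M_{c_n,l_n}^{-1}(g_n)-M_{c,0}^{-1}(\bar g)=M_{c_n,l_n}^{-1}(g_n-g)+\big(M_{c_n,l_n}^{-1}(g)-M_{c,0}^{-1}(\bar g)\big)$, bounding the first term by $C\|g_n-g\|_{L^2}$ and approximating $g$ in $L^2$ by $H^1$ functions in the second, and this scheme also yields the uniformity over $B_M$.

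For the reduced problem I would split $g=\bar g+\tilde g$, where $\bar g(\xi)$ is the $y$-average \eqref{average} and $\tilde g$ has zero $y$-average, and handle the two pieces by different mechanisms. For the oscillating part $\tilde g$ the claim is that $u_n:=M_{c_n,l_n}^{-1}(\tilde g)\to0$ in $H^1$. Testing $M_{c_n,l_n}(u_n)=\tilde g$ against $u_n$ and integrating by parts (the $\xi$-transport and the boundary terms drop by decay and periodicity) gives, by the uniform ellipticity \eqref{uni-elliptic}, $\beta\|u_n\|_{L^2}^2\le|\int \tilde g\,u_n|$. Now $\|u_n\|_{H^1}\le C\|\tilde g\|_{L^2}$ and $\|\nabla_y u_n\|_{L^2}\le Cl_n\|\tilde g\|_{L^2}\to0$ by \eqref{inverse-1}, so any weak $L^2$-limit of $u_n$ is independent of $y$; since $\tilde g$ has zero $y$-average, $\int\tilde g\,u_n\to0$, whence $\|u_n\|_{L^2}\to0$. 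Because the coefficients of $M_{c_n,l_n}$ are independent of $\xi$, the operator commutes with $\partial_\xi$, so $\partial_\xi u_n=M_{c_n,l_n}^{-1}(\partial_\xi\tilde g)$ with $\partial_\xi\tilde g$ again of zero $y$-average and in $L^2$ (this is where $g\in H^1$ is used); the same energy argument gives $\|\partial_\xi u_n\|_{L^2}\to0$, and together with $\|\nabla_y u_n\|_{L^2}\to0$ this yields $\|u_n\|_{H^1}\to0$.

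For the $y$-independent part $\bar g$ I would carry out the two-scale construction. Setting $w=M_{c,0}^{-1}(\bar g)\in H^3(\R)$ and using the corrector $\chi$, define $\tilde v_n=w+l_n\chi(y)\,w'(\xi)$. Expanding $M_{c_n,l_n}(\tilde v_n)$ in powers of $l_n$, the $l_n^{-1}$ terms cancel exactly by the corrector equation \eqref{de-chi}, the $O(1)$ part equals $b(y)w''+c_nw'-\beta w$ with $b=eAe+2(Ae)\cdot\nabla_y\chi+(\nabla_y\!\cdot(Ae))\chi$, and the remaining terms are $O(l_n)$ in $L^2$ (controlled by $\|w\|_{H^3}$, using $\chi\in C^{2,\alpha}$). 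The key algebraic point is that the $y$-average of $b$ equals $A_0$: this follows from $\int_{\T^d}(\nabla_y\!\cdot(Ae))\chi\,dy=-\int_{\T^d}(Ae)\cdot\nabla_y\chi\,dy$ by an integration by parts on $\T^d$ as in the footnote following \eqref{defA_0}, so that $b(y)w''=A_0w''+(b(y)-A_0)w''$ with $(b-A_0)w''$ of zero $y$-average, while $A_0w''+c_nw'-\beta w=\bar g+(c_n-c)w'$ by the definition of $w$. Thus $M_{c_n,l_n}(\tilde v_n)-\bar g$ is the sum of $(c_n-c)w'$, an $O(l_n)$ term, and the zero-mean term $(b-A_0)w''\in H^1(\R\times\T^d)$; applying $M_{c_n,l_n}^{-1}$ and using \eqref{inverse-1} on the first two contributions and the oscillating-part estimate of the previous paragraph on the third gives $\|M_{c_n,l_n}^{-1}(\bar g)-\tilde v_n\|_{H^1}\to0$. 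Since $\|\tilde v_n-w\|_{H^1}=O(l_n)$, this proves $M_{c_n,l_n}^{-1}(\bar g)\to w$ in $H^1$, and adding the two parts completes \eqref{convinverse1}.

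I expect the main obstacle to be upgrading weak to strong $H^1$ convergence for the oscillating part, specifically the control of $\partial_\xi u_n$: the bare energy estimate controls only $\|u_n\|_{L^2}$ and the combined flux $\|\tilde\partial_{l_n}u_n\|_{L^2}$, and does not separate $\partial_\xi u_n$ from the large quantity $l_n^{-1}\nabla_y u_n$. The device that resolves this is the commutation of $M_{c_n,l_n}$ with $\partial_\xi$, which recasts the estimate for $\partial_\xi u_n$ as the same zero-mean problem and is exactly where the hypothesis $g\in H^1$ (invoked after the density reduction) is needed. A secondary technical burden is the justification of the integration-by-parts identities and the vanishing of boundary terms in the unbounded $\xi$-direction, together with the regularity $\chi\in C^{2,\alpha}$ and $w\in H^3(\R)$ required to make the two-scale remainder genuinely $O(l_n)$ in $L^2$.
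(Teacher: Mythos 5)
Your proposal is essentially correct, and it organizes the proof differently from the paper. The paper does not split the data: it fixes $g$, sets $v_n=M_{c,l_n}^{-1}(g)$, extracts a weak $H^1$ limit $v_0$, identifies $v_0=M_{c,0}^{-1}(\bar g)$ by testing against the oscillating test functions $\psi=\phi+l_n\chi\phi'$ (Tartar's method), and then proves strong uniform convergence by an energy estimate on the single corrector error $\zeta_n=v_n-v_0-l_n\chi v_0'$, using the Poincar\'e--Wirtinger inequality on $\T^d$ for the zero-$y$-mean terms and symmetric difference quotients for the $\partial_\xi$ estimate. You instead exploit linearity to decompose $g=\bar g+\tilde g$, kill the oscillating datum $\tilde g$ by an energy argument, and treat $\bar g$ by the explicit two-scale ansatz $w+l_n\chi w'$, so that the limit is guessed rather than derived by compactness. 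The two proofs rest on the same pillars -- the corrector equation \eqref{de-chi}, the algebraic identity that the $y$-average of $b=eAe+2(Ae)\cdot\nabla_y\chi+(\nabla_y\cdot(Ae))\chi$ equals $A_0$ (this is exactly the paper's statement that its $I_1$ has zero mean), the smallness $\|\nabla_y u_n\|=O(l_n)$ from \eqref{inverse-1}, and the translation invariance in $\xi$ (your commutation of $M_{c_n,l_n}$ with $\partial_\xi$ is the clean form of the paper's difference-quotient step, and this is precisely where the $H^1$ hypothesis enters in both arguments). Your modular scheme is arguably more transparent; the paper's scheme has the advantage that the identification of the limit and the strong convergence are obtained in one pass on the same quantity $\zeta_n$.

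One concrete inconsistency to repair: your own reduction demands a \emph{rate} depending only on $\|g\|_{H^1(\R\times\T^d)}$, $|c_n-c|$ and $l_n$ (this is what the uniformity over $B_M$ in the lemma requires), but your treatment of the oscillating part is purely qualitative -- the step ``any weak $L^2$-limit of $u_n$ is $y$-independent, hence $\int\tilde g\,u_n\to0$'' is a compactness argument attached to the fixed datum $\tilde g$ and yields no modulus of convergence. The fix is one line, and it is exactly the device the paper uses: since $\tilde g$ has zero $y$-average,
\begin{equation*}
\Bigl|\int_{\R}\int_{\T^d}\tilde g\,u_n\Bigr|
=\Bigl|\int_{\R}\int_{\T^d}\tilde g\,(u_n-\overline{u}_n)\Bigr|
\le C_P\,\|\tilde g\|_{L^2(\R\times\T^d)}\,\|\nabla_y u_n\|_{L^2(\R\times\T^d)}
\le C\,l_n\,\|\tilde g\|_{L^2(\R\times\T^d)}^2
\end{equation*}
by the Poincar\'e--Wirtinger inequality on $\T^d$ and the second line of \eqref{inverse-1}, whence $\|u_n\|_{L^2}\le C\sqrt{l_n}\,\|\tilde g\|_{L^2}$, and the same quantitative bound applies to $\partial_\xi u_n=M_{c_n,l_n}^{-1}(\partial_\xi\tilde g)$ after your commutation step. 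With this substitution (and noting that the norms $\|w\|_{H^3(\R)}$, $\|w'\|_{L^2}$, $\|I_3\|_{L^2}$ appearing in your two-scale part are all controlled by $\|\bar g\|_{H^1(\R)}\le C\|g\|_{H^1(\R\times\T^d)}\le CM$), your whole argument becomes quantitative and the uniformity over $B_M$ follows as you claim.
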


\begin{proof}
We only consider the convergence \eqref{convinverse1}, since \eqref{convinverse2} is similar and simpler. 
Below, we prove \eqref{convinverse1} in the special case where $g_n=g$ and $c_n=c$ are independent of $n$. Once this is achieved, the arguments for the general case are the same as those used in Step 2 of the proof of \cite[Lemma 3.2]{dhz1}.

For each $n\in\N$, let $v_n=M_{c,l_n}^{-1}(g)\in H^1(\R\times\T^d)$. Since the sequence $(l_n)_{n\in\N}$ is bounded, 
it follows from the first line of \eqref{inverse-1} that $(v_n)_{n\in\N}$ is bounded in $H^1(\R\times\T^d)$. Then there exists a function $v_0 \in H^1(\R\times\T^d)$ such that, up to extraction of some subsequence, 
 $v_{n}\to v_0$ strongly in $L^2_{loc}(\R\times \T^d)$ and $v_{n}\rightharpoonup v_0$ weakly in $H^1(\R\times \T^d)$. 
 Thanks to the second line of \eqref{inverse-1},  we have $\nabla_yv_0=0$. Thus, the function $v_0$ can be viewed as an $H^1(\R)$ function and we can set $v_0'=\partial_{\xi}v_0$. For any $\phi\in H^2(\R)$, choosing $\psi(\xi,y)=\phi(\xi)+l_n\chi(y)\phi'(\xi)\in H^1(\R\times\T^d)$ as a test function in~$M_{c,l_n}(v_n)=g$ gives 
$$\int_{\R}-\kern-10pt\int_{\T^d}((\nabla_y\chi+e)\phi'+el_n\chi\phi'')  A\tilde{\partial}_{l_n}v_n+\int_{\R}-\kern-10pt\int_{\T^d}
(-c\partial_{\xi}v_n+\beta v_n)(\phi+l_n\chi\phi')=-\int_{\R}-\kern-10pt\int_{\T^d} g(\phi+l_n\chi\phi').$$
Since $l_n\to 0$, it follows from the weak convergence of $v_n$ to $v_0$ in  $H^1(\R\times \T^d)$ that 
\begin{equation*}
\begin{split}
\int_{\R}-\kern-10pt\int_{\T^d} ((\nabla_{y}\chi+e)\phi'+&el_{n}\chi\phi'') A\tilde{\partial}_{l_{n}}v_{n}\vspace{3pt}\\
& = \int_{\R}-\kern-10pt\int_{\T^d}-v_{n}\tilde{\partial}_{l_{n}}\cdot (A(\nabla_{y}\chi+e)\phi')+(el_{n}\chi\phi'') A\tilde{\partial}_{l_{n}}v_{n}\vspace{3pt}\\
& = \int_{\R}-\kern-10pt\int_{\T^d}eA(\nabla_{y}\chi+e)\phi'\partial_{\xi}v_{n}+(e\chi\phi'') A(el_{n}\partial_{\xi}v_{n}+\nabla_{y}v_{n})\vspace{3pt}\\
&\xrightarrow{n\to+\infty} \int_{\R}-\kern-10pt\int_{\T^d} eA(\nabla_{y}\chi+e)v_0'\phi'=A_0\int_{\R}v_0'\phi'.
\end{split}
\end{equation*}
Moreover, it is clear that 
$$\int_{\R}-\kern-10pt\int_{\T^d} (-c\partial_{\xi}v_n+\beta v_n)(\phi+l_n\chi\phi')\to\int_{\R}(-cv'_0+\beta v_0)\phi,$$ 
and 
$$\int_{\R}-\kern-10pt\int_{\T^d} g(\phi+l_n\chi\phi')\to\int_{\R} \bar{g}\phi$$ 
as $n\to+\infty$. Thus, $\int_{\R}A_0v_0'\phi'-cv_{0}'\phi+\beta v_0\phi=-\int_{\R} \bar{g}\phi$ for all $\phi\in H^2(\R)$ and then for all $\phi\in H^1(\R)$ by density. This means that $v_0$ is a weak solution of $A_0v''+cv'-\beta v=\bar{g}$ in $\R$. By the regularity theory, $v_0\in H^2(\R)\cap C^2(\R)$, and $M_{c,0}(v_0)=\bar{g}$.

In the arguments below, we show that $v_n$ converges to $v_0$ in $H^1(\R\times\T^d)$ as $n\to+\infty$, and this convergence is uniform with respect to $g$ in the ball $ B_M=\big\{g\in H^1(\R\times\T^d)\,:\,\|g\|_{H^1(\R\times\T^d)}\le M\big\}$.  For each $n\in\N$, let 
\begin{equation*}
\zeta_n(\xi,y)=v_n(\xi,y)-v_0(\xi)-l_n\chi(y)v_0'(\xi).
\end{equation*}
Since the sequence $(v_n)_{n\in\N}$ is bounded in $H^1(\R\times\T^d)$, $v_0\in H^2(\R)$ and $\chi(y)\in C^2(\R^d)$, the sequence $(\zeta_n)_{n\in\N}$ is bounded in $H^1(\R\times\T^d)$. Since $l_n\to 0$ as $n\to\infty$, to complete the proof, it suffices to show that $\zeta_n$ converges to $0$ in $H^1(\R\times\T^d)$ as $n\to\infty$ uniformly for $g\in B_M$.
First fo all, by the second line of \eqref{inverse-1}, $\nabla_y v_n$ converges to $0$ uniformly with respect to $g\in B_M$, and hence, so does $\nabla_y \zeta_n$.

Next, choosing $\zeta_n$ as a test function in $M_{c,l_n}(v_n)=g$ gives 
$$\int_{\R}-\kern-10pt\int_{\T^d}(\tilde{\partial}_{l_n}\zeta_n)  A\tilde{\partial}_{l_n}v_n-c\zeta_n\partial_{\xi}v_n+\beta v_n\zeta_n=-\int_{\R}-\kern-10pt\int_{\T^d} g\,\zeta_n.$$
It is straightforward to compute that
\begin{equation*}
\begin{split}
\int_{\R}-\kern-10pt\int_{\T^d} &(\tilde{\partial}_{l_n}\zeta_n) A\tilde{\partial}_{l_n}v_n \\
& =\int_{\R}-\kern-10pt\int_{\T^d}(\tilde{\partial}_{l_n}\zeta_n) A\tilde{\partial}_{l_n}\zeta_n+(\tilde{\partial}_{l_n}\zeta_n) Aev'_0+(\tilde{\partial}_{l_n}\zeta_n) A\nabla_y\chi v'_0+l_n(\tilde{\partial}_{l_n}\zeta_n) Ae\chi v''_0\vspace{3pt}\\
&=\int_{\R}-\kern-10pt\int_{\T^d}(\tilde{\partial}_{l_n}\zeta_n) A\tilde{\partial}_{l_n}\zeta_n+(\tilde{\partial}_{l_n}\zeta_n) A(\nabla_y\chi+e)v'_0+l_n(\tilde{\partial}_{l_n}\zeta_n) Ae\chi v''_0\vspace{3pt}\\
&=\int_{\R}-\kern-10pt\int_{\T^d}(\tilde{\partial}_{l_n}\zeta_n) A\tilde{\partial}_{l_n}\zeta_n -eA(\nabla_y\chi+e)v''_0\zeta_n +l_n(e\partial_{\xi}\zeta_n) Ae\chi v''_0- \nabla_y\cdot(Ae\chi)v''_0\zeta_n,\\
\end{split}
\end{equation*}
$\int_{\R}-\kern-9pt\int_{\T^d} c\zeta_n\partial_{\xi}v_n = \int_{\R}-\kern-9pt\int_{\T^d}cv_0'\zeta_n+l_nc\chi v_0''\zeta_n$, and $\int_{\R}-\kern-9pt\int_{\T^d}\beta v_n\zeta_n=\int_{\R}-\kern-9pt\int_{\T^d}\beta\zeta_n^2+\beta v_0\zeta_n+l_n\beta\chi v_0'\zeta_n$. For convenience, set 
$$I_1:=eA(\nabla_y\chi+e)+\nabla_y\cdot(Ae\chi)-A_0,$$ 
$$I_2:=(e\partial_{\xi}\zeta_n) Ae\chi v''_0-c\chi v''_0\zeta_n+\beta\chi v'_0\zeta_n,$$ 
and 
$$I_3:=eAe\chi v'''_0+c\chi v''_0-\beta\chi v'_0.$$
It is clear that 
$$-\kern-10pt\int_{\T^d} I_1=0,\quad  \left| \int_{\R}-\kern-10pt\int_{\T^d} I_2\right |\leq M_1,\quad  \int_{\R}-\kern-10pt\int_{\T^d}I_3^2\leq M_2, $$ 
for some constants $M_1>0,M_2>0$ independent of $n$ and $g$.  Since $M_{c,0}(v_0)=\bar{g}$, we have 
$$ \int_{\R}-\kern-10pt\int_{\T^d}(\tilde{\partial}_{l_n}\zeta_n) A\tilde{\partial}_{l_n}\zeta_n+\beta\zeta_n^2=  \displaystyle \int_{\R}-\kern-10pt\int_{\T^d}(\bar{g}-g+I_1v_0'')\zeta_n-l_nI_2.$$
Notice that $-\kern-9pt\int_{\T^d}(\bar{g}-g+I_1v_0'')=0$. It follows from the Poincar\'e  inequality that 
$$\baa{rcl}
\displaystyle\int_{\R}-\kern-10pt\int_{\T^d}(\tilde{\partial}_{l_{n}}\zeta_{n}) A\tilde{\partial}_{l_{n}}\zeta_{n}+\beta\zeta_{n}^{2} & \leq  &\displaystyle\int_{\R}-\kern-10pt\int_{\T^d}(\bar{g}-g+I_1v_0'')\zeta_n+|l_n| M_1\vspace{5pt}\\
& = & \displaystyle\int_{\R}-\kern-10pt\int_{\T^d}(\bar{g}-g+I_1v_0'')(\zeta_n-\overline{\zeta}_n)+|l_n|M_1\vspace{5pt}\\
& \leq & \displaystyle M_3\|\nabla_y\zeta_n\|_{L^2(\R\times \T^d)} +|l_n|M_1,\eaa$$
where $M_3>0$ is a constant independent of $n$ and $g$. Thanks to \eqref{uni-elliptic} and the fact $\beta>0$, this in particular implies that  $\zeta_n\to0$ in $L^2(\R\times \T^d)$  as $n\to+\infty$ uniformly for $g\in  B_M$. 

It remains to show $\partial_{\xi}\zeta_n\to 0$ in $L^2(\R\times \T^d)$ as $n\to\infty$ uniformly for $g\in B_M$. 
To do so, we define the symmetric difference quotient of $\zeta_n$ in the $\xi$-direction as follows
\begin{equation*}
D_hv_n(\xi,y):=\frac{v_n(\xi+h,y)-v_n(\xi-h,y)}{2h}\in H^1(\R\times\T^d).
\end{equation*} 
Notice that 
\begin{equation*}
\begin{split}
M_{c,L_n}(\zeta_n) &=g-cv'_0+\beta v_0-\big(eA(\nabla_y\chi+e)+\nabla_y\cdot(Ae\chi)\big)v''_0-l_nI_3\\
&=g-\bar{g}-I_1v''_0-l_nI_3.
\end{split}
\end{equation*} 
Integrating this equation against $D_h\zeta_n$ gives 
$$\displaystyle\left|c\int_{\R}-\kern-10pt\int_{\T^d}(\partial_{\xi} \zeta_n)^2\right| \leq \displaystyle \left|\int_{\R}-\kern-10pt\int_{\T^d}(g-\bar{g}-I_1v''_0)\partial_{\xi}\zeta_n \right|+|l_n|\left|\int_{\R}-\kern-10pt\int_{\T^d}I_3\partial_{\xi}\zeta_n\right|.$$
Since $-\kern-9pt\int_{\T^d} \partial_{\xi}(g-\bar{g}-I_1v''_0)=0$, it follows from the Cauchy-Schwarz inequality and the Poincar\'e inequality that 
$$\baa{rcl}
\displaystyle \left|\frac{c}{2}\int_{\R}-\kern-10pt\int_{\T^d}(\partial_{\xi} \zeta_n)^2\right| & \leq & \displaystyle \left|\int_{\R}-\kern-10pt\int_{\T^d}(g-\bar{g}-I_1v''_0)\partial_{\xi}\zeta_n \right|+\frac{l_n^2}{2c}\int_{\R}-\kern-10pt\int_{\T^d}I_3^2\vspace{5pt}\\
& \leq & \displaystyle \left|\int_{\R}-\kern-10pt\int_{\T^d}\partial_{\xi}(g-\bar{g}-I_1v''_0)\zeta_n \right|+\frac{l_n^2}{2c}M_3\vspace{5pt}\\
& = & \displaystyle \left|\int_{\R}-\kern-10pt\int_{\T^d}\partial_{\xi}(g-\bar{g}-I_1v''_0)(\zeta_n-\overline{\zeta}_n) \right|+\frac{l_n^2}{2c}M_3\vspace{5pt}\\
& \leq  & \displaystyle M_4\|\nabla_y\zeta_n\|_{L^2(\R\times \T^d)}+\frac{l_n^2}{2c}M_3,
\eaa$$
where  $M_4>0$ is independent of $n$ and $g$.  This implies that  $\partial_{\xi}\zeta_n\to 0$ as $n\to+\infty$ uniformly for $g\in  B_M$.  
Combining the above, we obtain the uniform convergence of $v_n$ to $v_0$ in  $H^1(\R\times \T^d)$ with respect to $g\in B_M$. 
This ends the proof of Lemma \ref{continuem}.
\end{proof}

Now, we introduce the operator for which the implicit function theorem is employed.  
Recall that $\phi_0$ is the traveling wave of \eqref{onehom} connecting $0$ and $1$ with speed $c_0>0$. Since $\bar{f}'(0)<0$ and $\bar{f}'(1)<0$,  
$\phi_0(\xi)$ approaches $0$ and $1$ exponentially fast as $\xi\to +\infty$ and $\xi\to -\infty$, respectively. This implies that $\phi_0\in L^2([0,+\infty))$, $1-\phi_0\in L^2((-\infty,0])$. Moreover, since $\bar{f}\in C^1(\R)$, by elliptic regularity, we have $\phi'_0\in H^2(\R)$. 
For any $v\in H^1(\R\times\T^d)$, $c>0$ and $l\in\R$, set
\begin{equation*}
\begin{aligned}
K(v,c,l)(\xi,y)=&\big(eA(y)(\nabla_{y}\chi(y)+e)+\nabla_{y}\cdot(A(y)e\chi(y))\big)\phi''_{0}(\xi)+l(eA(y)e\chi(y))\phi'''_{0}(\xi) \vspace{3pt}\\
&+c(\phi'_{0}(y)+l\chi(y)\phi''_{0}(\xi))+f(y,v(\xi,y)+\phi_{0}(\xi)+l\chi(y)\phi'_{0}(\xi))+\beta v(\xi,y).
\end{aligned}
\end{equation*}
It is straightforward to check that $K(v,c,l)\in L^2(\R\times\T^d)$. Define $G(v,c,l)=(G_1,G_2)(v,c,l)$ for $(v,c,l)\in H^1(\R\times\T^d)\times (0,+\infty)\times \R$  by 
\be\label{defG}\left\{\baa{rcl}
G_1(v,c,l) & = & \left\{\baa{ll}
v+M_{c,l}^{-1}\big(K(v,c,l)\big) & \hbox{if }l\neq0,\vspace{3pt}\\
v+M_{c,0}^{-1}\big(\overline{K(v,c,0)}\big) & \hbox{if }l=0,\eaa\right.\vspace{3pt}\\
G_2(v,c,l) & = & \displaystyle\int_{\R^+\times \T^d}\big(\phi_{0}(\xi)+v(\xi,y)+l\chi(y)\phi_{0}'(\xi)\big)^2-\int_{\R^+}\phi_0^2,\eaa\right.
\ee
where $\overline{K(v,c,0)}\in L^2(\R)$ is defined as $\bar{f}$ in~\eqref{average}. Thanks to \eqref{inverse-1} and \eqref{inverse-2}, we have $G$: $H^1(\R\times\T^d)\times(0,+\infty)\times\R \to H^1(\R\times\T^d)\times\R$. It is easily seen that $G(0,c_0,0)=(0,0)$. Moreover, by the parabolic regularity, 
a pair $(\phi_l,c^*_l)\in\big(\phi_0+H^1(\R\times\T^d)\big)\times(0,+\infty)$ solves the problem \eqref{pulsolu}-\eqref{con-limit} for $l\neq 0$ with the normalization 
$\int_{\R^+\times \T^d} \phi_l^2=\int_{\R^+} \phi_0^2$ if and only if $G(\phi_l-\phi_0-l\chi\phi_0',c^*_l,l)=(0,0)$.

On the other hand, by arguments similar to those used in the proof of \cite[Lemma 3.4]{dhz1}, one can conclude that the operator $G$ is continuous with respect to $(v,c,l)$ and it is continuously Fr\'echet differentiable with respect to $(v,c)$.  Furthermore, the derivative $\partial_{(v,c)}G(0,c_0,0): H^1(\R\times\T^d)\times\R\to H^1(\R\times\T^d)\times\R $ is invertible.

Finally, applying the implicit function theorem on the function $G: H^1(\R\times\T^d) \times (0,+\infty)\times\R\to H^1(\R\times\T^d) \times\R$, we obtain some $l^*:=l^*(e)>0$ and a unique continuous mapping $h(l)=(v_l,c^*_l)$ for  $l\in(0,l^*)$ such that $h(0)=(0,c_0)$, $G(h(l),l)=(0,0)$ and $h(l)\to h(0)$ as $l\to 0^+$. Letting $\phi_l(\xi,y)=\phi_0(\xi)+v_l(\xi,y)+l\chi(y)\phi_0'(\xi)$ for $(\xi,y)\in\R\times \T^d$, 
it follows that $(\phi_l,c^*_l)$ is a solution of \eqref{pulsolu}-\eqref{con-limit}, and that  $\phi_l-\phi_0\to0$ in $H^1(\R\times\T^d)$ as $l\to0^+$. 
Furthermore, by the assumption (A3) and the proof of Lemma \ref{exp-decay-hd-1}, any non-stationary pulsating wave of \eqref{eql} approaches the limiting states $0$ and $1$ exponentially fast. This together with the uniqueness of wave speed stated in Theorem \ref{th:tw} implies the 
the uniqueness of $(\phi_l,c^*_l)$ (we refer the reader to the end of the proof of \cite[Theorem 1.2]{dhz1} for more details). The proof of Theorem \ref{thhomo} is thus complete.

\end{document}